\documentclass[onefignum,onetabnum]{siamonline220329}

\usepackage{cleveref}

\makeatletter
\AtBeginDocument{%
    \def\refstepcounter@optarg[#1]#2{%
        \cref@old@refstepcounter{#2}%
        \cref@constructprefix{#2}{\cref@result}%
        \@ifundefined{cref@#1@alias}%
            {\def\@tempa{#1}}%
            {\def\@tempa{\csname cref@#1@alias\endcsname}}%
        \protected@edef\cref@currentlabel{%
            [\@tempa][\arabic{#2}][\cref@result]%
            \csname p@#2\endcsname\csname the#2\endcsname}}%
}
\makeatother

\usepackage{mathtools}
\usepackage{amsfonts}
\usepackage{array}
\usepackage{placeins}
\usepackage{needspace}

\usepackage{enumitem}
\setlist[enumerate]{leftmargin=.5in}
\setlist[itemize]{leftmargin=.5in}

\newsiamremark{remark}{Remark}
\newsiamremark{assumption}{Assumption}
\crefname{assumption}{Assumption}{Assumptions}
\newsiamremark{example}{Example}
\crefname{example}{Example}{Examples}
\newcommand{\dummy}{{\color{black!50}\bullet}}
\headers{Prior-to-Posterior Stability in the Wasserstein Metric}{L. Cao}

\title{On Prior-to-Posterior Stability in the Wasserstein Metric\\
for Bayesian Inverse Problems\thanks{This work is funded by the U.S. Department of Defense Vannevar Bush Faculty Fellowship held by Andrew M. Stuart under the Office of Naval Research award number N00014-22-1-2790.}}

\author{Lianghao Cao\thanks{Department of Computing and Mathematical Sciences, California Institute of Technology, CA 91125, USA.
  (\email{lianghao@caltech.edu})
  }
}

\usepackage{amsopn}
\usepackage{amssymb}

\begin{document}

\maketitle

\begin{abstract}
Priors in Bayesian inverse problems are often approximated through discretization, hyperparameter estimation, or generative modeling. Understanding how prior approximation errors propagate to the posterior and subsequent predictions is therefore important. In this work, we study the stability of the prior-to-posterior map where both prior and posterior perturbations are measured in the same Wasserstein metric $W_p$, $p\geq1$. We identify verifiable conditions on likelihood regularity and admissible prior classes that ensure uniform, H\"older, and Lipschitz stability. For bounded likelihoods that are uniformly continuous on bounded sets, uniform stability holds over prior classes with uniformly integrable $p$-th moments and a common positive evidence lower bound. With global H\"older regularity of the likelihood and uniform bounds on higher prior moments, a coupling argument leads to a H\"older estimate with a sharp exponent. For $p>1$, a Lipschitz likelihood need not give Lipschitz stability, even for priors with bounded support. We establish Lipschitz stability through an interpolation argument under uniform Poincar\'e bounds and a globally Lipschitz potential with uniformly bounded essential oscillation under the priors. For Gaussian priors with additive Gaussian noise and bounded Lipschitz forward models, these estimates give posterior $W_2$ bounds even for mutually singular prior perturbations. Numerical experiments for a Darcy inverse problem illustrate the predicted H\"older and Lipschitz rates and the resulting control of errors in the posterior mean and standard deviation of a Lipschitz quantity of interest.
\end{abstract}

\begin{keywords}
Bayesian inverse problems, stability analysis, optimal transport, uncertainty quantification
\end{keywords}

\begin{MSCcodes}
62F15, 49Q22, 60B05.
\end{MSCcodes}

\section{Introduction}

Bayesian inverse problems use noisy, indirect observations to update uncertainty about an unknown parameter. Their solution is a posterior probability distribution, from which quantities of interest (QoIs) are estimated with quantified uncertainties. In practice, the prior distribution in a Bayesian inverse problem often arises from approximations: it may be numerically discretized, its hyperparameters may be estimated, or it may be constructed or learned from samples. It is therefore important to understand how such approximations affect the posterior and downstream QoIs predictions.

In this work, we study the stability of the prior-to-posterior map in Bayesian inverse problems, which is formulated as follows. Let the parameter space $(X,d_X)$ be a complete separable metric space. Let $\mu$ be the prior distribution, i.e., a Borel probability measure on $(X,d_X)$, and let a $\mu$-integrable function $L:X\to[0,\infty)$ be the likelihood for a fixed observation. If the normalizing constant, or evidence, $Z_\mu$ is positive, define
\begin{equation}\label{eq:intro_bayes_map}
  Z_\mu\coloneqq\mu(L),
  \qquad
  \mathcal B_L(\mu)(\mathrm{d}x)\coloneqq Z_\mu^{-1}L(x)\,\mu(\mathrm{d}x).
\end{equation}
Here $\mu(L)\coloneqq\int_X L(x)\,\mu(\mathrm{d}x)$, and $\mathcal B_L$ is the prior-to-posterior map associated with $L$. Given assumptions on the likelihood, a class of admissible priors, and discrepancy measures $d_{\rm in}$ and $d_{\rm out}$, we seek a modulus $g:[0,\infty)\to[0,\infty)$, with $g(r)\to0$ as $r\downarrow0$, such that
\begin{equation}\label{eq:intro_stability_question}
  d_{\rm out}\bigl(\mathcal B_L(\mu),\mathcal B_L(\nu)\bigr) \leq g\bigl(d_{\rm in}(\mu,\nu)\bigr),
\end{equation}
for all $\mu$ and $\nu$ in the admissible prior class. For example, $g(r)=Cr$ gives Lipschitz stability. We seek verifiable conditions on the likelihood and the prior class that gives uniform stability, H\"older stability, or Lipschitz stability of the prior-to-posterior map.

We consider the setting in which the Wasserstein metric $W_p$ with $p\geq 1$ quantifies both prior and posterior discrepancies, that is, $d_{\rm in}=d_{\rm out}=W_p$ in \eqref{eq:intro_stability_question}. Let $\mathcal{P}_p(X)$ denote the set of Borel probability measures on $X$ with finite $p$-th moments. For $\mu,\nu\in\mathcal{P}_p(X)$, the $p$-Wasserstein metric is defined by
\begin{equation*}
W_p(\mu,\nu)\coloneqq\left(\inf_{\pi\in\Pi(\mu,\nu)}\int_{X\times X}d_X(x,x')^p\,\pi(\mathrm{d}x,\mathrm{d}x')\right)^{1/p},
\end{equation*}
where $\Pi(\mu,\nu)$ is the set of couplings of $\mu$ and $\nu$. Unlike the Hellinger and total variation metrics, which are maximal for mutually singular measures, and the Kullback--Leibler divergence, which is infinite for mutually singular measures, $W_p$ retains information about the geometric separation of such measures. Much of the existing literature on quantitative Wasserstein stability takes $d_{\rm in}=W_2$ or $W_1$ and $d_{\rm out}=W_1$; see, for example, \cite{sprungk2020local}. Here, we instead seek posterior stability in $d_{\rm in}=d_{\rm out}=W_p$, in particular for $p\geq 2$, for the following reasons.

First, this posterior control in $W_p$ for $p\geq 2$ is useful for estimating the uncertainty of QoIs. As an example, for a Lipschitz function $Q\colon X\to\mathbb{R}$ with constant $\operatorname{Lip}(Q)$ and $\mu,\nu\in \mathcal P_p(X)$ for $p\geq 1$, we have
\begin{equation}\label{eq:mean_bound_wasserstein}
\left|\mu(Q)-\nu(Q)\right|
\leq
\operatorname{Lip}(Q)W_p(\mu,\nu).
\end{equation}
Moreover, we have the following bound for the standard deviation for $p\geq 2$:
\begin{equation}\label{eq:sd_bound_wasserstein}
\left|\operatorname{sd}_{\mu}(Q)-\operatorname{sd}_{\nu}(Q)\right|
\leq
\operatorname{Lip}(Q)W_p(\mu,\nu),
\qquad
\operatorname{sd}_{\mu}(Q)
\coloneqq
\left\lVert Q-\mu(Q)\right\rVert_{L^2_{\mu}(X)}.
\end{equation}
Here $L^2_{\mu}(X)$ denotes the space of square-integrable functions with respect to $\mu$.  Thus, $W_p$-stability with $p\geq 2$ controls perturbations in both the means and standard deviations of Lipschitz QoIs, making it particularly relevant in the context of uncertainty quantification.

Second, an estimate with $d_{\rm in}=d_{\rm out}=W_p$ can be applied recursively in sequential data assimilation. At each step, the filtering distribution is propagated through the model to become the next predictive prior. If the model is Lipschitz and the predictive priors remain in the admissible class, the same stability estimate controls each update. An estimate from $W_2$ to $W_1$ does not directly provide the $W_2$ input required at the next step.

\subsection{Contributions and Paper Organization}
\label{sec:intro_contributions}

Our first contribution is a uniform continuity result for the prior-to-posterior map $\mathcal B_L$ in $W_p$ for $p\geq1$. Proposition~\ref{prop:uniform_stability} assumes a bounded likelihood that is uniformly continuous on bounded sets, uniform integrability of the prior $p$-th moments, and a uniform evidence lower bound. Examples show that none of these assumptions can be omitted from this general statement.

Our second contribution is a $W_p$-H\"older stability result for $p\geq 1$ over an admissible prior class with bounded moments. This result is achieved assuming a bounded likelihood with global $\beta$-H\"older continuity with $\beta\in(0,1]$, over an admissible prior class that satisfies a uniform $q$-moment bound on the prior with $q>p$, and a uniform evidence lower bound. Specifically, Theorem~\ref{thm:holder_stability} gives
\begin{equation}\label{eq:intro_headline}
  W_p\bigl(\mathcal B_L(\mu),\mathcal B_L(\nu)\bigr)
  \leq C_{\rm stab}\,
  W_p(\mu,\nu)^{\alpha},
  \qquad
  \alpha\coloneqq\min\left\{\frac{\beta}{p},1-\frac{p}{q}\right\},
\end{equation}
for a constant $C_{\rm stab}>0$. Examples show that local Lipschitz continuity of the likelihood does not suffice for a uniform power modulus and that the exponent is sharp under the stated assumptions.

Our third contribution is a $W_p$-Lipschitz stability result for $p>1$. Theorem~\ref{thm:lipschitz_stability} assumes a globally Lipschitz potential $\Phi\coloneqq-\log L$, uniformly bounded $L^{p/(p-1)}$-Poincar\'e constants of the priors, and uniformly bounded essential oscillations of the potential under the priors. The proof reweights a $W_p$-optimal coupling through a geometric interpolation of the likelihood and controls $W_p$-distances along the resulting curve of marginals. As a special case, Corollary~\ref{cor:gaussian_prior_stability} gives a $W_2$-Lipschitz estimate for Gaussian priors and bounded Lipschitz forward maps with additive Gaussian noise. We illustrate these estimates in a two-layer Darcy flow inverse problem: linear prior-to-posterior sensitivity is observed for Gaussian priors,  while two-point priors lead to the square-root sensitivity rate predicted by the H\"older estimate.

The rest of the paper is organized as follows. This introduction section ends with a discussion of related work. Section~\ref{sec:preliminary} introduces the mathematical setting and reviews existing results. Section~\ref{sec:uniform_stability} establishes uniform $W_p$-stability and gives examples digesting the assumptions. Section~\ref{sec:holder_stability} proves the $W_p$-H\"older stability estimate and provide examples on the assumptions and the sharpness of the exponent. Section~\ref{sec:lipschitz_stability} proves $W_p$-Lipschitz stability and applies the result to problems with Gaussian priors and additive Gaussian noise. Section~\ref{sec:numerical_example} illustrates these estimates in a two-layer Darcy inverse problem. Section~\ref{sec:conclusion} summarizes the results and discusses future directions. Appendices~\ref{app:proof_of_posterior_coupling_estimate} and~\ref{app:supplementary_lemmas} contain the auxiliary lemmas used in the stability proofs. Appendix~\ref{app:darcy_details} provides details of the numerical example.

\subsection{Related Work}

Sensitivity to the prior and likelihood has a long history in robust Bayesian
analysis \cite{berger1994overview}. Results on Bayesian brittleness also show that weakly specified model classes can lead to instability \cite{owhadi2015brittleness}. The foundational well-posedness and approximation results often fix the prior, perturb the data or forward model, and control the posterior in the Hellinger metric \cite{stuart2010inverse,cotter2010approximation}. This framework also supports quantitative approximation of forward maps and surrogate likelihoods \cite{stuart2018posterior}, and was subsequently extended to broad non-Gaussian and heavy-tailed prior classes \cite{hosseini2017heavy,hosseini2017exponential}. Latz established qualitative data-to-posterior stability in several metrics, including $W_p$, the $p$-Wasserstein metric with $p\geq1$~\cite{latz2020wellposedness}.

Prior perturbations were treated by Sprungk, who proved qualitative $W_p$ stability of $\mathcal{B}_L$ and $W_1$-Lipschitz stability when the ground metric is bounded, and the likelihood is globally Lipschitz \cite{sprungk2020local}; these results are provided in Section~\ref{ssec:existing_results} of this work. Earlier work by Basu et al.\ studied local sensitivity to prior perturbations in total variation and weak-convergence metrics \cite{basu1998stability}, while Basu later considered stability uniformly over observations \cite{basu2000uniform}. In parametric Bayesian statistics, Ghaderinezhad and Ley used Stein kernels to bound the $W_1$ metric between posteriors generated by different scalar priors \cite{ghaderinezhad2019quantification}. More recently, Garbuno-I\~nigo et al.\ introduced likelihood-adapted integral probability metrics that accommodate locally Lipschitz potentials \cite{garbuno2026posterior}. Cvetkovi\'c and Lie derived upper and lower bounds in total variation, Hellinger metric, Kullback--Leibler divergence, and $W_1$ \cite{cvetkovic2025upper}. For bounded ground metrics, Wang and Gorodetsky derive global $W_1$ bounds relative to a fixed prior and use them to propagate approximation errors in Bayesian sequential learning \cite{wang2025global}. Helin et al.\ obtained a data-averaged $W_2$-to-$W_1$ prior-to-posterior bound in the context of optimal experimental design \cite{helin2025bayesian}.

Geometric assumptions and functional inequalities can yield Lipschitz estimates in $W_2$. Peyre proves a $W_2$-stability estimate for measures reweighted by a common compactly supported function and then normalized, under conditions on the original densities and the weight~\cite{peyre2018comparison}. Dolera and Mainini establish $W_2$-Lipschitz continuity of dominated probability kernels under Poincar\'e and Fisher-information bounds~\cite{dolera2023lipschitz}. Their perturbation is in the conditioning variable rather than the prior.

Our work is relevant to data-driven and generative priors \cite{patel2021gan,bohra2022bayesian,dasgupta2024dimension}. Hosseini and Huang propagate prior error in $W_2$ to posterior error in $W_1$ under weighted regularity assumptions and combine the estimate with finite-sample analysis for learned generators \cite{hosseini2026generative}. Trevisan propagated $W_2$ errors between finite-width neural-network output laws and their Gaussian limits into $W_1$ posterior errors \cite{trevisan2023wide}. Nelsen and Yang identify prior-to-posterior stability in $W_2$ as an important research direction for the theory of data-driven priors \cite{nelsen2026operator}.

\section{Preliminary}\label{sec:preliminary}

In this section, we first introduce the setting and notation relevant to this work. Then, we present existing results on $W_p$ stability, in particular qualitative $W_p$ stability and $W_1$-Lipschitz stability with a bounded ground metric, by Sprungk~\cite{sprungk2020local}.

\subsection{Setting and Notation}\label{ssec:setting_notation}

For every bounded and nonzero likelihood considered in this work, we normalize it and assume
\begin{equation}
\label{eq:likelihood_normalization}
 L:X\to[0,1],
 \qquad
 \sup_{x\in X}L(x)=1.
\end{equation}
This entails no loss of generality because $\mathcal B_{cL}=\mathcal B_L$ for every $c>0$. For a strictly positive likelihood, its potential is $\Phi\coloneqq-\log L$, so $L=e^{-\Phi}$.

Let $\mathcal P(X)$ denote the set of Borel probability measures on $X$. Fix a reference point $x_*\in X$ for moments and balls. For normed spaces, we use the norm-induced metric and take $x_*=0$. For $p\geq1$, set
\begin{equation*}
    m_p(\mu)\coloneqq\int_Xd_X(x,x_*)^p\,\mu(\mathrm{d}x),
\qquad
\mathcal P_p(X)\coloneqq\left\{\mu\in\mathcal P(X)\,\middle|\,m_p(\mu)<\infty\right\}.
\end{equation*}

In this work, we consider various notions of continuity and equate stability with continuity. Let $(Y,d_Y)$ be a metric space. For $F:X\to Y$, $A\subseteq X$, and $\beta\in(0,1]$, set
\begin{align*}
 \operatorname{Lip}(F;A)
 &\coloneqq \sup_{\substack{x,x'\in A\\x\neq x'}}
 \frac{d_Y(F(x),F(x'))}{d_X(x,x')}, &
 \operatorname{Hol}_\beta(F;A)
 &\coloneqq \sup_{\substack{x,x'\in A\\x\neq x'}}
 \frac{d_Y(F(x),F(x'))}{d_X(x,x')^\beta}.
\end{align*}
We set both quantities to zero when $A$ has at most one point and omit $A$ when $A=X$. Let $B_X(R)\coloneqq\{x\in X\mid d_X(x,x_*)\leq R\}$. We call $F$ globally Lipschitz (respectively, globally $\beta$-H\"older) when the corresponding constant on $X$ is finite. We call $F$ locally Lipschitz (respectively, locally $\beta$-H\"older) when the constant on $B_X(R)$ is finite for every $R>0$. For normed spaces, this convention agrees with \cite[Assumption~3(iii), p.~359]{dashti2017bayesian}. We call $F$ uniformly continuous on bounded sets if its restriction to $B_X(R)$ is uniformly continuous for every $R>0$. Figure~\ref{fig:continuity_implications} summarizes the implications among these continuity notions.

\begin{figure}[htb]
 \centering
 \includegraphics[width=0.75\textwidth]{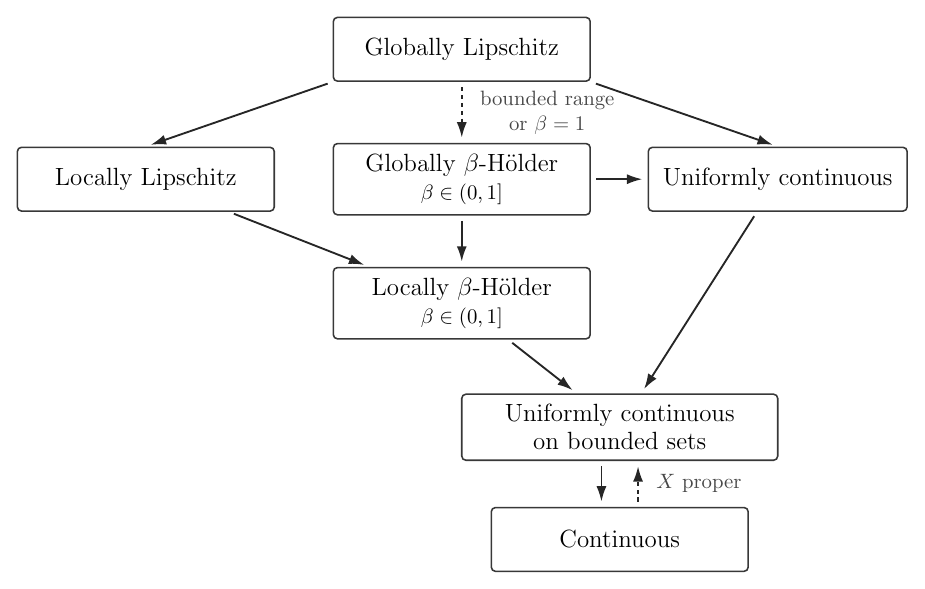}
 \caption{Implications among continuity notions for $F:X\to Y$, with a given $\beta\in(0,1]$. Here, ``local'' means uniform on bounded balls, with constants allowed to depend on the radius. Solid arrows hold unconditionally; labels on dashed arrows give sufficient conditions. Bounded range means $\operatorname{diam}F(X)<\infty$. The space $X$ is proper if its closed bounded balls are compact. For $\beta=1$, the corresponding Lipschitz and H\"older notions coincide.}
 \label{fig:continuity_implications}
\end{figure}

\subsection[Existing Results on Wp Stability]{Existing Results on $W_p$ Stability}\label{ssec:existing_results}
Sprungk \cite[Lemma~16]{sprungk2020local} proves the following result for strictly positive likelihoods. The same proof applies when $L$ is allowed to vanish, provided that the evidence is positive.
\begin{proposition}[Qualitative $W_p$ Stability {\cite[Lemma 16]{sprungk2020local}}]
\label{prop:qualitative_wp}
Let $L:X\to[0,1]$ be continuous and $p\geq 1$. Then $\mathcal B_L$ is
continuous in $W_p$ on $\{\mu\in\mathcal P_p(X)\mid\mu(L)>0\}$.
\end{proposition}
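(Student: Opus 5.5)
We will use the standard characterization of $W_p$-convergence on a complete separable metric space: $\mu_n\to\mu$ in $W_p$ if and only if $\mu_n\to\mu$ weakly and $m_p(\mu_n)\to m_p(\mu)$. Equivalently, weak convergence together with uniform integrability of $d_X(\cdot,x_*)^p$ under $\{\mu_n\}$. Since $W_p$ is a metric, continuity is sequential, so we fix $\mu\in\mathcal P_p(X)$ with $\mu(L)>0$ and a sequence $\mu_n\to\mu$ in $W_p$ with $\mu_n(L)>0$. We show that $\mathcal B_L(\mu_n)\to\mathcal B_L(\mu)$ in $W_p$ by verifying the two conditions above for the posteriors. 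First, $\mathcal B_L(\mu_n)\in\mathcal P_p(X)$, because $L\le 1$ gives $m_p(\mathcal B_L(\mu_n))\le m_p(\mu_n)/\mu_n(L)<\infty$.

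\emph{Step 1: convergence of the evidence.} $W_p$-convergence implies weak convergence. Since $L$ is bounded and continuous, $Z_{\mu_n}=\mu_n(L)\to\mu(L)=Z_\mu>0$. In particular, $Z_{\mu_n}\ge Z_\mu/2$ for all large $n$.

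\emph{Step 2: weak convergence of the posteriors.} Take $f\in C_b(X)$. Then $fL$ is bounded and continuous, so $\mu_n(fL)\to\mu(fL)$. Dividing by the evidences, which converge to a positive limit by Step 1, gives $\mathcal B_L(\mu_n)(f)\to\mathcal B_L(\mu)(f)$.

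\emph{Step 3: convergence of the $p$-th moments.} This is the only step that needs some care. The function $h(x)=d_X(x,x_*)^pL(x)$ is continuous and nonnegative but may be unbounded, so weak convergence alone does not give $\mu_n(h)\to\mu(h)$. We use the uniform integrability of $d_X(\cdot,x_*)^p$ under $\{\mu_n\}$, which follows from $W_p$-convergence. Since $0\le h\le d_X(\cdot,x_*)^p$, the family $h$ is also uniformly integrable under $\{\mu_n\}$. A truncation argument then gives $\mu_n(h)\to\mu(h)$: split $h=\min(h,M)+(h-M)_+$, apply weak convergence to the bounded continuous part $\min(h,M)$, and bound the tail $(h-M)_+$ uniformly in $n$ by uniform integrability, then let $M\to\infty$. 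Dividing by the evidences gives
\[
m_p\bigl(\mathcal B_L(\mu_n)\bigr)=\frac{\mu_n(h)}{Z_{\mu_n}}\to\frac{\mu(h)}{Z_\mu}=m_p\bigl(\mathcal B_L(\mu)\bigr).
\]

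Steps 2 and 3 together, via the characterization above, give $W_p(\mathcal B_L(\mu_n),\mathcal B_L(\mu))\to0$. The main obstacle is Step 3, and the key point there is that the bound $0\le L\le1$ transfers uniform integrability of the prior moments to the reweighted moments.
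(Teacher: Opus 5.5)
Your proof is correct and follows the route the paper indicates, via Sprungk's Lemma~16: you characterize $W_p$-convergence by weak convergence plus convergence of $p$-th moments, and you need only positivity of the limiting evidence $\mu(L)>0$. Your truncation step correctly uses $0\le d_X(\cdot,x_*)^pL\le d_X(\cdot,x_*)^p$ to carry uniform integrability over to the reweighted moments, which justifies the paper's remark that no uniform evidence bound or Lipschitz regularity is needed.
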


The proof uses the characterization of $W_p$ convergence in terms of weak convergence and convergence of the $p$-th moments. Only positivity of the limiting evidence is required; neither a uniform lower bound on the evidence nor Lipschitz continuity of the likelihood is needed.

The next result by Sprungk uses a \emph{bounded ground metric} $d_D$ on $X$ that induces the same topology as $d_X$ and satisfies
\begin{equation}\label{eq:bounded_ground_metric}
    \sup_{x,x'\in X}d_{D}(x,x')\leq D<\infty,
\end{equation}
for some $D>0$. A standard choice is the truncated metric $d_{D}(x,x')\coloneqq\min\{D,d_X(x,x')\}$. The corresponding $p$-Wasserstein metric is
\begin{equation*}
    W_{p,d_{D}}(\mu,\nu)
 \coloneqq
 \left(\inf_{\pi\in\Pi(\mu,\nu)}
 \int_{X\times X}d_{D}(x,x')^p\,\pi(\mathrm{d}x,\mathrm{d}x')\right)^{1/p}. 
\end{equation*}

\begin{proposition}[$W_1$-Lipschitz Stability with Bounded Ground Metric {\cite[Theorem 15]{sprungk2020local}}]\label{prop:bounded_ground_metric_W1}
Let $d_D$ be as above, and suppose that $L:X\to[0,1]$ is globally Lipschitz with respect to $d_D$, with constant $K$. Let $\mathfrak C\subset\mathcal P(X)$ satisfy $\inf_{\mu\in\mathfrak C}\mu(L)\geq\zeta>0$.
Then, for all $\mu,\nu\in\mathfrak C$,
\begin{equation*}
    W_{1,d_D}\bigl(\mathcal B_L(\mu),\mathcal B_L(\nu)\bigr)
\leq
\zeta^{-2}(1+DK)^2\,W_{1,d_D}(\mu,\nu).
\end{equation*}
\end{proposition}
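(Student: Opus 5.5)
The strategy is to use Kantorovich--Rubinstein duality for $W_{1,d_D}$. On the probability measures on $X$, this duality reads
\begin{equation*}
W_{1,d_D}(\mu',\nu')=\sup\bigl\{\mu'(f)-\nu'(f)\;\bigm|\;\operatorname{Lip}_{d_D}(f)\leq1\bigr\}.
\end{equation*}
Because $d_D$ is bounded by $D$, $W_{1,d_D}$ is finite on all of $\mathcal P(X)$ and no moment conditions are needed. Subtracting a constant does not change $\mu'(f)-\nu'(f)$, so we may restrict to $1$-Lipschitz $f$ with $f(x_0)=0$ at some fixed point $x_0$. Such $f$ then satisfy $\sup|f|\leq D$.

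Fix such an $f$ and write $\mu^L\coloneqq\mathcal B_L(\mu)$. Splitting the difference of quotients gives
\begin{equation*}
\mu^L(f)-\nu^L(f)=\frac{\mu(fL)-\nu(fL)}{Z_\mu}+\nu(fL)\Bigl(\frac{1}{Z_\mu}-\frac{1}{Z_\nu}\Bigr).
\end{equation*}
The second term can be rewritten as $\frac{\nu(fL)}{Z_\nu}\cdot\frac{Z_\nu-Z_\mu}{Z_\mu}=\nu^L(f)\,\frac{\nu(L)-\mu(L)}{Z_\mu}$.

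The key regularity facts concern the $d_D$-Lipschitz constants of $L$ and $fL$.
\begin{itemize}
\item $L$ is $K$-Lipschitz by assumption, and $0\leq L\leq 1$.
\item By the product rule, $\operatorname{Lip}_{d_D}(fL)\leq \sup|f|\,K+\sup L\cdot 1\leq DK+1$.
\end{itemize}
Applying the duality formula to each of these functions gives
\begin{equation*}
|\mu(fL)-\nu(fL)|\leq(1+DK)\,W_{1,d_D}(\mu,\nu),\qquad |\mu(L)-\nu(L)|\leq K\,W_{1,d_D}(\mu,\nu).
\end{equation*}

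Combining these with $|\nu^L(f)|\leq D$ and $Z_\mu\geq\zeta$ yields
\begin{equation*}
|\mu^L(f)-\nu^L(f)|\leq \zeta^{-1}\bigl(1+DK+DK\bigr)W_{1,d_D}(\mu,\nu)\leq \zeta^{-2}(1+DK)^2\,W_{1,d_D}(\mu,\nu).
\end{equation*}
The last inequality uses $\zeta\leq \sup L=1$, so $\zeta^{-1}\leq\zeta^{-2}$, and $1+2DK\leq(1+DK)^2$. Taking the supremum over $f$ gives the claim.

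The main delicate point is the normalization $f(x_0)=0$, which is what gives $\sup|f|\leq D$. Without it, $f$ is not bounded and the product bound $\operatorname{Lip}(fL)\leq DK+1$ fails. We must also check that duality applies: $d_D$ is a bounded metric inducing the same topology, so $(X,d_D)$ is separable and duality holds on $\mathcal P(X)$. A cruder splitting than the one above, without rewriting in terms of $\nu^L(f)$, would yield the stated constant directly as $\zeta^{-2}$ times products of the same factors. Since the stated bound is weaker than ours, either route suffices.
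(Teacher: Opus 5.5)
Your proof is correct. The paper gives no proof of its own here; it imports the result from \cite{sprungk2020local}, so there is no in-paper argument to match step by step.

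Your duality argument checks out. The normalization gives $\|f\|_\infty\le D$, and the product rule gives $\operatorname{Lip}_{d_D}(fL)\le 1+DK$. The bounds on $\mu(fL)-\nu(fL)$ and $\mu(L)-\nu(L)$ use only the elementary coupling direction. The Kantorovich--Rubinstein theorem itself is needed only for the final supremum, and there it suffices that $d_D$ is a continuous metric for the Polish topology of $X$.

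You in fact obtain the sharper constant $\zeta^{-1}(1+2DK)$. One small correction to your closing remark: a $d_D$-$1$-Lipschitz $f$ is always bounded, with oscillation at most $D$. The normalization only fixes the size of $\|f\|_\infty$. Centering $f$ at the midpoint of its range gives $\|f\|_\infty\le D/2$, which improves the constant to $\zeta^{-1}(1+DK)$.

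For comparison, the primal construction behind Lemma~\ref{lem:posterior_coupling_estimate}, which the paper uses for its own results, also proves this statement. With a bounded metric, the residual mass satisfies $r=\tfrac12\int|a(x)-b(x')|\,\pi(\mathrm dx,\mathrm dx')\le\zeta^{-1}K\,W_{1,d_D}(\mu,\nu)$; here one uses $\int L(x')\,\pi(\mathrm dx,\mathrm dx')=Z_\nu$ rather than $L\le1$. This mass is moved at cost at most $D$, which again gives $\zeta^{-1}(1+DK)$.

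The primal route works for every $p$, but for $p>1$ it only yields the exponent $1/p$, which Example~\ref{ex:holder_sharpness} shows cannot be improved without further assumptions. Your dual route gives a genuinely linear bound, but it relies on the linear Kantorovich--Rubinstein dual, which is specific to $p=1$. For $p>1$ the paper replaces it with the Hopf--Lax criterion of Lemma~\ref{lem:dual_wasserstein_speed}.
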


 With the bounded ground metric, transport beyond distance $D$ has bounded cost, and this estimate does not directly extend to the general case of an unbounded ground metric.

\section{Qualitative Uniform Stability}
\label{sec:uniform_stability}

In this section, we establish uniform $W_p$-stability for $p\geq1$. Lemma~\ref{lem:posterior_coupling_estimate} provides a coupling estimate used here and in the proof of the H\"older stability estimate. Proposition~\ref{prop:uniform_stability} then gives uniform continuity under a bounded likelihood that is uniformly continuous on bounded sets, uniform integrability of the prior $p$-th moments, and a uniform evidence lower bound. The examples in Section~\ref{ssec:uniform_stability_examples} show why these assumptions cannot be omitted in general.

\subsection{Posterior Coupling Estimate}\label{ssec:posterior_coupling_estimate}
We start with a posterior coupling estimate that is critical to both deriving and understanding uniform stability estimates. Given an optimal coupling between the priors, The Bayesian update reweights the two coordinates of this coupling, and the reweighted coupling need not have both posterior measures as its marginals. We provide the following lemma, which controls the $W_p$ metric of the posteriors by retaining the part of the prior optimal coupling compatible with the reweighting and controlling the residual probability mass. The residual-mass construction follows the weighted total-variation coupling argument in \cite[Theorem~6.15]{villani2009optimal}.

\begin{lemma}[Posterior Coupling Estimate]
\label{lem:posterior_coupling_estimate}
Let $L:X\to[0,1]$ be measurable and $p\geq 1$. Let $\mu,\nu\in\mathcal P_p(X)$ satisfy $\mu(L),\nu(L)\geq\zeta>0$ and let $\pi$ be a $W_p$-optimal coupling. Then
\begin{equation}
\label{eq:posterior_coupling_estimate}
\begin{aligned}
 W_p\bigl(\mathcal B_L(\mu),\mathcal B_L(\nu)\bigr)^p
 &\leq\zeta^{-1}W_p(\mu,\nu)^p\\
 &\quad+2^{p-1}\zeta^{-1}\int_{X\times X}
 \bigl(d_X(x,x_*)^p+d_X(x',x_*)^p\bigr)
 |L(x)-L(x')|\,\pi(\mathrm{d}x,\mathrm{d}x')\\
 &\quad+2^{p-1}\zeta^{-2}\left(m_p(\mu)+m_p(\nu)\right)
 \int_{X\times X}|L(x)-L(x')|\,\pi(\mathrm{d}x,\mathrm{d}x').
\end{aligned}
\end{equation}
\end{lemma}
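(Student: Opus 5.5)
The plan is to build an explicit coupling of the two posteriors from the $W_p$-optimal prior coupling $\pi$. We keep the part of $\pi$ on which the two reweightings agree and join the leftover mass by a product coupling, as in the weighted total-variation argument of \cite[Theorem~6.15]{villani2009optimal}.

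\textbf{Construction.} Write $Z_\mu=\mu(L)$ and $Z_\nu=\nu(L)$. Define the two weights and the common weight
\begin{equation*}
f(x)\coloneqq L(x)/Z_\mu,\qquad g(x')\coloneqq L(x')/Z_\nu,\qquad w(x,x')\coloneqq\min\{f(x),g(x')\}.
\end{equation*}
Then $f\,\pi$ has first marginal $\mathcal B_L(\mu)$, and $g\,\pi$ has second marginal $\mathcal B_L(\nu)$. Let $a$ be the first marginal of $(f-w)\,\pi$ and $b$ the second marginal of $(g-w)\,\pi$. Both are nonnegative measures with the same mass $\varepsilon\coloneqq1-\int w\,\mathrm d\pi$. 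Set
\begin{equation*}
\gamma\coloneqq w\,\pi+\varepsilon^{-1}\,a\otimes b,
\end{equation*}
with $\gamma\coloneqq w\,\pi$ if $\varepsilon=0$. The first marginal of $\gamma$ is the first marginal of $w\pi$ plus $a$, which equals the first marginal of $f\pi$, namely $\mathcal B_L(\mu)$. Symmetrically, the second marginal is $\mathcal B_L(\nu)$. So $\gamma$ is a coupling of the posteriors, and $W_p(\mathcal B_L(\mu),\mathcal B_L(\nu))^p\le\int d_X^p\,\mathrm d\gamma$.

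\textbf{Cost of the retained part.} Since $w\le f\le\zeta^{-1}$, we get $\int d_X(x,x')^p\,w\,\mathrm d\pi\le\zeta^{-1}W_p(\mu,\nu)^p$ by optimality of $\pi$. This is the first term of \eqref{eq:posterior_coupling_estimate}.

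\textbf{Cost of the residual part.} Using $d_X(y,y')^p\le2^{p-1}\bigl(d_X(y,x_*)^p+d_X(y',x_*)^p\bigr)$ and the fact that $a$ and $b$ both have mass $\varepsilon$, we obtain
\begin{equation*}
\varepsilon^{-1}\int d_X(y,y')^p\,a(\mathrm dy)\,b(\mathrm dy')\le 2^{p-1}\Bigl(\int d_X(y,x_*)^p\,a(\mathrm dy)+\int d_X(y',x_*)^p\,b(\mathrm dy')\Bigr).
\end{equation*}
The $1/\varepsilon$ cancels, which is the key point of the construction.

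\textbf{Bounding the residual densities.} This is the only step requiring care. Pointwise,
\begin{equation*}
f-w=(f-g)_+\le|f-g|\le\frac{|L(x)-L(x')|}{Z_\mu}+L(x')\left|\frac{1}{Z_\mu}-\frac{1}{Z_\nu}\right|.
\end{equation*}
Now use $L(x')\le1$, $Z_\mu,Z_\nu\ge\zeta$, and $|Z_\mu-Z_\nu|=\bigl|\int(L(x)-L(x'))\,\mathrm d\pi\bigr|\le\int|L(x)-L(x')|\,\mathrm d\pi$. This gives
\begin{equation*}
f-w\le\zeta^{-1}|L(x)-L(x')|+\zeta^{-2}\int|L-L'|\,\mathrm d\pi,
\end{equation*}
and the same bound holds for $g-w$.

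\textbf{Conclusion.} Integrating $d_X(x,x_*)^p$ against $(f-w)\pi$ and $d_X(x',x_*)^p$ against $(g-w)\pi$ with these bounds produces three pieces:
\begin{itemize}
\item the integral $\zeta^{-1}\int\bigl(d_X(x,x_*)^p+d_X(x',x_*)^p\bigr)|L(x)-L(x')|\,\mathrm d\pi$;
\item the term $\zeta^{-2}m_p(\mu)\int|L-L'|\,\mathrm d\pi$, from the first marginal of $\pi$ being $\mu$;
\item the term $\zeta^{-2}m_p(\nu)\int|L-L'|\,\mathrm d\pi$, from the second marginal being $\nu$.
\end{itemize}
Multiplying by $2^{p-1}$ and adding the retained-part cost gives exactly \eqref{eq:posterior_coupling_estimate}.

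Measurability of $w$, and the fact that all integrals are finite because $\mu,\nu\in\mathcal P_p(X)$, are routine.
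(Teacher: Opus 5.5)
Your proposal is correct and matches the paper's proof in every step. Both arguments use the coupling $\gamma=\min\{f,g\}\,\pi+\varepsilon^{-1}a\otimes b$ built from the residual marginals, bound the retained part by $\zeta^{-1}W_p(\mu,\nu)^p$, and cancel $\varepsilon^{-1}$ through the $2^{p-1}$ triangle-inequality split; they then finish with the pointwise bound $|f-g|\leq\zeta^{-1}|L(x)-L(x')|+\zeta^{-2}|Z_\mu-Z_\nu|$ together with $|Z_\mu-Z_\nu|\leq\int|L(x)-L(x')|\,\pi(\mathrm{d}x,\mathrm{d}x')$.
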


A proof of this lemma is provided in Appendix~\ref{app:proof_of_posterior_coupling_estimate}.

\subsection{Main Result}\label{ssec:uniform_stability}

Based on Lemma~\ref{lem:posterior_coupling_estimate}, the following proposition identifies uniform integrability of the $p$-th moments as a sufficient condition for uniform $W_p$-stability. For a class of priors $\mathfrak C\subset\mathcal P_p(X)$, define its moment-tail function by
\begin{equation*}
\tau_{\mathfrak C,p}(R)\coloneqq\sup_{\mu\in\mathfrak C}\int_{\{d_X(x,x_*)>R\}}d_X(x,x_*)^p\,\mu(\mathrm{d}x).
\end{equation*}

\begin{proposition}[Uniform $W_p$-Stability]
\label{prop:uniform_stability}
Let $L:X\to[0,1]$ be uniformly continuous on bounded sets and $p\geq 1$. Suppose that $\mathfrak C\subset\mathcal P_p(X)$ satisfies $\lim_{R\to \infty}\tau_{\mathfrak C,p}(R)=0$ and $\inf_{\mu\in\mathfrak C}\mu(L)\geq\zeta>0$. 
Then $\mathcal B_L:(\mathfrak C,W_p)\to(\mathcal P_p(X),W_p)$ is uniformly continuous.
\end{proposition}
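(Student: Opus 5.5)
The plan is to apply Lemma~\ref{lem:posterior_coupling_estimate} to $\mu,\nu\in\mathfrak C$ with a $W_p$-optimal coupling $\pi$, and to show that each of the three terms on the right-hand side tends to zero as $W_p(\mu,\nu)\to0$, uniformly over $\mathfrak C$. The first term is simply $\zeta^{-1}W_p(\mu,\nu)^p$. For the third term, uniform integrability gives a uniform moment bound $M\coloneqq\sup_{\mathfrak C}m_p<\infty$: choose $R_0$ with $\tau_{\mathfrak C,p}(R_0)\le1$, so that $m_p(\mu)\le R_0^p+1$. It then suffices to show that $\int|L(x)-L(x')|\,\pi$ is small uniformly. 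The second term is the delicate one, because the moment weight is unbounded.

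The key estimate is a truncation. Fix $\varepsilon>0$ and choose $R\ge1$ so large that $\tau_{\mathfrak C,p}(R)\le\varepsilon$. Let $\omega_{2R}$ be a modulus of continuity of $L$ on $B_X(2R)$. Split $X\times X$ into three regions. The first is $G=\{d_X(x,x_*)\le R,\ d_X(x,x')\le\delta\}$ with $\delta\le R$, so that $x,x'\in B_X(2R)$. The second is $\{d_X(x,x_*)>R\}$. The third is $\{d_X(x,x_*)\le R,\ d_X(x,x')>\delta\}$.

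On $G$, $|L(x)-L(x')|\le\omega_{2R}(\delta)$, and the weight satisfies $d_X(x,x_*)^p+d_X(x',x_*)^p\le 2(2R)^p$. Hence the contribution to the second term is at most $2(2R)^p\omega_{2R}(\delta)$.

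On the second region, use $|L(x)-L(x')|\le1$. The weight $d_X(x,x_*)^p$ integrates to at most $\tau_{\mathfrak C,p}(R)\le\varepsilon$. The weight $d_X(x',x_*)^p$ is bounded by $2^{p-1}(d_X(x,x_*)^p+d_X(x,x')^p)$, which contributes at most $2^{p-1}(\varepsilon+W_p(\mu,\nu)^p)$.

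On the third region, the weight is at most $2^{p-1}(2R^p+d_X(x,x')^p)$ by the same triangle-type inequality. By Markov's inequality the $\pi$-mass of the region is at most $\delta^{-p}W_p(\mu,\nu)^p$, and $\int d_X(x,x')^p\,\pi=W_p(\mu,\nu)^p$.

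The unweighted integral in the third term is handled by the same splitting: it is bounded by $\omega_{2R}(\delta)+R^{-p}M+\delta^{-p}W_p^p$, and $R^{-p}M$ can be made small by taking $R$ larger.

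To conclude, choose $R$ first, depending only on $\varepsilon$ and $\mathfrak C$. Next choose $\delta$ so that $R^p\omega_{2R}(\delta)\le\varepsilon$. Finally choose $\eta$ so that $W_p(\mu,\nu)<\eta$ makes $(R^p\delta^{-p}+1)W_p^p\le\varepsilon$. All constants depend only on $\varepsilon$, $\zeta$, $p$, $\mathfrak C$ and $L$, not on the particular pair $\mu,\nu$, so the posterior distance is $O(\varepsilon)^{1/p}$ uniformly, which is uniform continuity. The target space is correct because posteriors have $m_p\le\zeta^{-1}m_p(\mu)<\infty$.

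The main obstacle is the second term of \eqref{eq:posterior_coupling_estimate}: the weight $d_X(x',x_*)^p$ is controlled by the tail of $\nu$ on the set where $x$ is far away, and by the tail of $\mu$ where $x$ is close. I expect to handle this by the triangle-inequality transfer above, or symmetrically by also truncating in $x'$. This is the step where uniform integrability, rather than a mere moment bound, is essential.
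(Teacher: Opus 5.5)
Your proof is correct and follows essentially the paper's argument: apply Lemma~\ref{lem:posterior_coupling_estimate}, bound the bounded near-diagonal region with the local modulus of $L$, the far-from-diagonal region with Markov's inequality, and the tail region with uniform integrability, choosing $R$, then $\delta$, then the prior distance. The only difference is the truncation. You truncate in $x$ alone and transfer the weight $d_X(x',x_*)^p$ through $d_X(x',x_*)^p\leq2^{p-1}\bigl(d_X(x,x_*)^p+d_X(x,x')^p\bigr)$, at the cost of an extra $W_p(\mu,\nu)^p$ term. The paper instead truncates symmetrically on $E_R=\{d_X(x,x_*)^p+d_X(x',x_*)^p\leq2R^p\}$ and uses the tails of both marginals.
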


\begin{proof}[Proof]
Uniform integrability gives $\sup_{\mu\in\mathfrak C}m_p(\mu)<\infty$, since $m_p(\mu)\leq R^p+\tau_{\mathfrak C,p}(R)$. By Lemma~\ref{lem:posterior_coupling_estimate}, it therefore suffices to show that
\begin{equation*}
     \int_{X\times X}
 \bigl(1+d_X(x,x_*)^p+d_X(x',x_*)^p\bigr)
 |L(x)-L(x')|\,\pi(\mathrm{d}x,\mathrm{d}x')
 \longrightarrow0
\end{equation*}
uniformly as $W_p(\mu,\nu)\to0$, where $\pi$ is a $W_p$-optimal coupling of $\mu$ and $\nu$. For $R,r>0$, define
\begin{equation*}
 \omega_{L,R}(r)\coloneqq
 \sup_{\substack{x,x'\in B_X(R)\\d_X(x,x')}\leq r}
 |L(x)-L(x')|.
\end{equation*}
By assumption, $\omega_{L,R}(r)\to0$ as $r\downarrow0$ for every $R>0$. Let

\begin{equation*}
 E_R\coloneqq\{(x,x')\in X\times X\mid d_X(x,x_*)^p+d_X(x',x_*)^p\leq2R^p\}.
\end{equation*}
On
$E_R\cap\{d_X(x,x')\leq r\}$, both points lie in $B_X(2^{1/p}R)$.
Markov's inequality and $|L(x)-L(x')|\leq1$ therefore give
\begin{align*}
&\int_{X\times X}
 \bigl(1+d_X(x,x_*)^p+d_X(x',x_*)^p\bigr)
 |L(x)-L(x')|\,\pi(\mathrm{d}x,\mathrm{d}x')\\
&\quad\leq(1+2R^p)
 \left(\omega_{L,2^{1/p}R}(r)+r^{-p}W_p(\mu,\nu)^p\right)
 +\left(4+2R^{-p}\right)\tau_{\mathfrak C,p}(R).
\end{align*}
The last term bounds the integral over $E_R^c$. Indeed, $1\leq(d_X(x,x_*)^p+d_X(x',x_*)^p)/(2R^p)$ on this set, and
\begin{equation*}
     (d_X(x,x_*)^p+d_X(x',x_*)^p)\mathbf 1_{E_R^c}
 \leq2 d_X(x,x_*)^p\mathbf 1_{\{d_X(x,x_*)>R\}}
    +2 d_X(x',x_*)^p\mathbf 1_{\{d_X(x',x_*)>R\}}.
\end{equation*}
Here $\mathbf 1_{\dummy}$ is the indicator function. Integrating the right-hand side gives at most $4\tau_{\mathfrak C,p}(R)$, leading to the stated bound on the integral over $E_R^c$. First choose $R$ sufficiently large, then $r$ sufficiently small, and finally $W_p(\mu,\nu)$ sufficiently small; these choices are uniform in $\mu,\nu\in\mathfrak C$.
\end{proof}

\subsection{Dissecting the Assumptions}
\label{ssec:uniform_stability_examples}

The next four examples show that none of the following assumptions in Proposition~\ref{prop:uniform_stability} can be omitted. We repeatedly use the following formula for two-atom probability measures on $\mathbb R$:
\begin{equation}
\label{eq:two_atom_Wp}
 W_p\left((1-a)\delta_0+a\delta_r,
 (1-\widetilde a)\delta_0+\widetilde a\delta_s\right)
 =\left(a(s-r)^p+(\widetilde a-a)s^p\right)^{1/p},
\end{equation}
valid for $0\leq a\leq\widetilde a\leq1$ and $0\leq r\leq s$ by monotone transport on the line. The two terms move the shared mass $a$ from $r$ to $s$ and the excess mass $\widetilde a-a$ from $0$ to $s$, respectively.

\begin{example}[Continuity Is Insufficient in Infinite Dimensions]
\label{ex:likelihood_regularity}
In an infinite-dimensional space, a continuous likelihood need not be uniformly continuous on bounded sets. Let $X=\ell^2$, let $(e_n)_{n\geq1}$ be its canonical orthonormal basis, and define
\begin{equation}\label{eq:likelihood_inf_dim}
     L(x)\coloneqq\frac{1}{2}\left(1+\sum_{n=2}^\infty
 \bigl(1-n\|x-e_n\|_X\bigr)_+\right).
\end{equation}
Here $(t)_+\coloneqq\max\{t,0\}$. For $n\geq2$, the $n$-th summand is supported on the closed ball centered at $e_n$ with radius $n^{-1}$. These balls are pairwise separated by at least $\sqrt2-5/6$ and hence form a locally finite family. Thus $L$ is continuous and $1/2\leq L\leq1$.

For $n\geq2$, set $x_n\coloneqq e_n$ and $y_n\coloneqq(1-n^{-1})e_n$. Both points lie in $B_X(1)$, but $\|x_n-y_n\|_X=n^{-1}\to0$ as $n\to\infty$, whereas $|L(x_n)-L(y_n)|=1/2$. Hence $L$ is not uniformly continuous on bounded sets.

Define
\begin{equation*}
     \mu_n=\frac{1}{2}\delta_0+\frac{1}{2}\delta_{x_n},
 \qquad
 \nu_n=\frac{1}{2}\delta_0+
       \frac{1}{2}\delta_{y_n}.
\end{equation*}
These priors are supported on $B_X(1)$, so their $p$-th moments are uniformly integrable; $Z_{\mu_n}=3/4$, $Z_{\nu_n}=1/2$, and $W_p(\mu_n,\nu_n)^p=(2n^p)^{-1}\to0$ as $n\to\infty$.
Since $L(0)=L(y_n)=1/2$ and $L(x_n)=1$, the posteriors are
\begin{equation*}
    \mathcal B_L(\mu_n)=\frac{1}{3}\delta_0+\frac{2}{3}\delta_{x_n},
 \qquad
 \mathcal B_L(\nu_n)=\frac{1}{2}\delta_0+
       \frac{1}{2}\delta_{y_n}.
\end{equation*}
Both posteriors are supported on the ray $\mathbb R_+e_n$. Applying \eqref{eq:two_atom_Wp} to $W_p(\mathcal B_L(\nu_n),\mathcal B_L(\mu_n))$, with $(a,\widetilde a,r,s)=(1/2,2/3,1-1/n,1)$, and using symmetry gives
\begin{equation*}
     W_p\bigl(\mathcal B_L(\mu_n),\mathcal B_L(\nu_n)\bigr)^p
 =\frac{1}{6}+\frac{1}{2n^p}\longrightarrow\frac{1}{6} \quad\text{as }n\to\infty.
\end{equation*}
Thus, in infinite-dimensional spaces, continuity cannot replace the uniform continuity assumption in Proposition~\ref{prop:uniform_stability}.
\end{example}

\begin{example}[Failure Without a Bounded Likelihood]
\label{ex:unbounded_likelihood}
We consider an unbounded likelihood, which the normalization in \eqref{eq:likelihood_normalization} does not apply. Let $X=\mathbb R$, $L(x)=1+|x|$, and
\begin{equation*}
     \mu=\delta_0,
 \qquad
 \nu_n=(1-n^{-(p+1)})\delta_0+n^{-(p+1)}\delta_n.
\end{equation*}
The likelihood is globally Lipschitz, $Z_\mu=1$, $Z_{\nu_n}=1+n^{-p}$, and the $p$-th moments are uniformly integrable because
\begin{equation*}
     \sup_{n\geq1}\int_{\{|x|>R\}}|x|^p\,\nu_n(\mathrm{d}x)
 =\sup_{n>R}\frac{1}{n}
 \leq\frac{1}{R}\longrightarrow0 \quad\text{as }R\to\infty.
\end{equation*}
Moreover, $W_p(\mu,\nu_n)^p=n^{-1}\to0$ as $n\to\infty$, whereas $\mathcal B_L(\mu)=\delta_0$ and $\mathcal B_L(\nu_n)$ assigns mass $(1+n)n^{-(p+1)}/(1+n^{-p})$ to $n$. Consequently,
\begin{equation*}
 W_p\bigl(\mathcal B_L(\mu),\mathcal B_L(\nu_n)\bigr)^p
 =\frac{1+n}{n(1+n^{-p})}\longrightarrow1
 \quad\text{as }n\to\infty.
\end{equation*}
Thus the factor $L(n)=n+1$ turns vanishing tail mass into an order-one posterior transport cost, so boundedness cannot be omitted.
\end{example}

\begin{example}[Uniform $p$-th Moment Bounds Are Insufficient]
\label{ex:pth_moment_tail}
Let $X=\mathbb R$, $L(x)=(2+\sin(\pi x/2))/3$, $R_n=4n$, and $a_n=R_n^{-p}$. The likelihood is bounded and globally Lipschitz, with $L(0)=L(R_n)=2/3$ and $L(R_n+1)=1$.
Define
\begin{equation*}
 \mu_n=(1-a_n)\delta_0+a_n\delta_{R_n},
 \qquad
 \nu_n=(1-a_n)\delta_0+a_n\delta_{R_n+1}.
\end{equation*}
Their evidences are $Z_{\mu_n}=2/3$ and $Z_{\nu_n}=(2+a_n)/3$, while $m_p(\mu_n)=1$ and $m_p(\nu_n)=(1+R_n^{-1})^p$; hence their $p$-th moments are uniformly bounded. They are not uniformly integrable:
\begin{equation*}
 \lim_{R\to\infty}\sup_{n\geq1}
 \int_{\{|x|>R\}}|x|^p\,\mu_n(\mathrm{d}x)=1.
\end{equation*}
The priors move only mass $a_n$ by unit distance, so $W_p(\mu_n,\nu_n)=R_n^{-1}\to0$ as $n\to\infty$. The Bayesian Update changes the tail mass from $a_n$ to $3a_n/(2+a_n)$. Thus the shared mass $a_n$ moves from $R_n$ to $R_n+1$, while the excess $a_n(1-a_n)/(2+a_n)$ moves from $0$ to $R_n+1$. Formula~\eqref{eq:two_atom_Wp} gives
\begin{equation*}
 W_p\bigl(\mathcal B_L(\mu_n),\mathcal B_L(\nu_n)\bigr)^p
 =a_n
 +\frac{a_n(1-a_n)}{2+a_n}(R_n+1)^p
 \longrightarrow\frac{1}{2}
 \quad\text{as }n\to\infty.
\end{equation*}
This shows that uniform $p$-th moment bounds cannot replace uniform integrability.
\end{example}

\begin{example}[Failure Without a Uniform Evidence Lower Bound]
\label{ex:evidence_collapse}
Let $X=[0,1]$ and let $L(x)=x$. For $\varepsilon\in(0,1)$, set
\begin{equation*}
 \mu_\varepsilon=(1-\varepsilon)\delta_0+\varepsilon\delta_1,
 \qquad
 \nu_\varepsilon=(1-\varepsilon)\delta_0
 +\varepsilon\delta_{1/2}.
\end{equation*}
The likelihood is globally Lipschitz, and the prior $p$-th moments are uniformly integrable because all the priors are supported on $[0,1]$. Their positive evidences, $Z_{\mu_\varepsilon}=\varepsilon$ and $Z_{\nu_\varepsilon}=\varepsilon/2$, have infimum zero. Although $W_p(\mu_\varepsilon,\nu_\varepsilon)=\varepsilon^{1/p}/2\to0$ as $\varepsilon\downarrow0$, the equality $L(0)=0$ means that the Bayesian update assigns no mass to the common atom at $0$, giving $\mathcal B_L(\mu_\varepsilon)=\delta_1$ and $\mathcal B_L(\nu_\varepsilon)=\delta_{1/2}$. Hence the posterior distance remains $1/2$ as $\varepsilon\downarrow0$, so a uniform evidence lower bound is necessary.
\end{example}

\section{H\"older Stability}\label{sec:holder_stability}

We now strengthen the uniform continuity result by deriving an explicit H\"older modulus. A global H\"older bound on the likelihood and a uniform $q$-moment bound on the priors, with $q>p$, give a power-law estimate in $W_p$.

\begin{theorem}[H\"older Stability]
\label{thm:holder_stability}
Let $L:X\to[0,1]$
be globally $\beta$-H\"older with $K\coloneqq\operatorname{Hol}_\beta(L)$ for some $\beta\in(0,1]$. Let $p\geq 1$ and $q>p$. Set $\alpha\coloneqq\min\left\{\beta/p,1-p/q\right\}$.
For any $B\geq 0$ and $\zeta\in(0,1]$, define the admissible prior class 
\begin{equation*}
    \mathfrak C_q(B,\zeta;L)
\coloneqq\left\{\mu\in\mathcal P_q(X)\,\middle|\,
m_q(\mu)\leq B,\,
\mu(L)\geq\zeta\right\}.
\end{equation*}
Then, there exists $C_{\mathrm{stab}}>0$ that depends on  $\beta$, $K$, $p$, $q$, $B$, and $\zeta$ such that
\begin{equation*}
    W_p\bigl(\mathcal B_L(\mu),\mathcal B_L(\nu)\bigr)
\leq C_\mathrm{stab}\, W_p(\mu,\nu)^\alpha\,\qquad\forall \mu,\nu\in\mathfrak C_q(B,\zeta;L).
\end{equation*}
\end{theorem}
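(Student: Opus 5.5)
We start from Lemma~\ref{lem:posterior_coupling_estimate}, applied with a $W_p$-optimal coupling $\pi$ of $\mu,\nu\in\mathfrak C_q(B,\zeta;L)$, and write $\varepsilon\coloneqq W_p(\mu,\nu)$. Since $m_p\leq m_q^{p/q}\leq B^{p/q}$ by Jensen, the moment prefactor $m_p(\mu)+m_p(\nu)$ is bounded by $2B^{p/q}$. The task then reduces to bounding two coupling integrals by $C\varepsilon^{p\alpha}$:
\begin{equation*}
I_0\coloneqq\int|L(x)-L(x')|\,\mathrm d\pi,
\qquad
I_1\coloneqq\int\bigl(d_X(x,x_*)^p+d_X(x',x_*)^p\bigr)|L(x)-L(x')|\,\mathrm d\pi.
\end{equation*}
The first term $\zeta^{-1}\varepsilon^p$ is handled separately. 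If $\varepsilon\leq1$, then $\varepsilon^p\leq\varepsilon^{p\alpha}$ because $\alpha\leq1$. If $\varepsilon>1$, then $\varepsilon\leq W_p(\mu,\delta_{x_*})+W_p(\delta_{x_*},\nu)\leq 2B^{1/q}$, so the whole left-hand side is bounded by a constant. Indeed, $W_p(\mathcal B_L(\mu),\mathcal B_L(\nu))\leq 2(\zeta^{-1}B^{p/q})^{1/p}$, since $m_p(\mathcal B_L(\mu))\leq\zeta^{-1}m_p(\mu)$. That constant is at most $C\varepsilon^\alpha$. So we may assume $\varepsilon\leq1$.

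For $I_0$, we use $|L(x)-L(x')|\leq\min\{1,Kd_X(x,x')^\beta\}\leq K d_X(x,x')^\beta$. If $\beta\leq p$, Jensen (concavity of $t\mapsto t^{\beta/p}$) gives $I_0\leq K\varepsilon^\beta$. Since $\beta\geq p\alpha$ and $\varepsilon\leq1$, this is at most $K\varepsilon^{p\alpha}$. (If $p<\beta$, which is impossible since $p\geq1\geq\beta$, no issue arises.)

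The main obstacle is $I_1$, where the weight is unbounded. We split with a truncation radius $R\geq1$ on the set $E\coloneqq\{d_X(x,x_*)^p+d_X(x',x_*)^p\leq R^p\}$. On $E$, the integrand is at most $R^pKd_X(x,x')^\beta$, which integrates to at most $KR^p\varepsilon^\beta$. On $E^c$, we use $|L(x)-L(x')|\leq1$ together with the same splitting as in the proof of Proposition~\ref{prop:uniform_stability}: the weight is at most $2d_X(x,x_*)^p\mathbf 1_{\{d_X(x,x_*)^p>R^p/2\}}$ plus the symmetric term. Markov's inequality with the $q$-moment gives
\begin{equation*}
\int_{\{d_X(x,x_*)>r\}}d_X(x,x_*)^p\,\mu(\mathrm dx)\leq r^{p-q}B.
\end{equation*}
Hence the tail contributes at most $C B R^{p-q}$, and
\begin{equation*}
I_1\leq KR^p\varepsilon^\beta+CBR^{-(q-p)}.
\end{equation*}
Optimizing, we choose $R=\varepsilon^{-\beta/q}$, which yields $I_1\lesssim\varepsilon^{\beta(1-p/q)}$. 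That exponent is smaller than $p\alpha$ in general, so we sharpen the bound on $E$.

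The sharper bound interpolates the Hölder bound with the trivial bound $|L(x)-L(x')|\leq 1$. Use $|L(x)-L(x')|\leq K^{\theta}d_X(x,x')^{\beta\theta}$ with $\theta\coloneqq \min\{1,p\alpha/\beta\}$. Then apply Hölder's inequality on the whole space with exponents $q/p$ for the weight and $q/(q-p)$ for the difference term:
\begin{equation*}
I_1\leq\Bigl(\int (d_X(x,x_*)^p+d_X(x',x_*)^p)^{q/p}\,\mathrm d\pi\Bigr)^{p/q}\Bigl(\int|L(x)-L(x')|^{q/(q-p)}\,\mathrm d\pi\Bigr)^{1-p/q}.
\end{equation*}
The first factor is at most $C B^{p/q}$. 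For the second, we bound $|L(x)-L(x')|^{q/(q-p)}\leq |L(x)-L(x')|\leq K d_X(x,x')^\beta$, since the difference is at most $1$. This gives $(K\varepsilon^\beta)^{1-p/q}$, which is weaker again. Instead we set $s\coloneqq q/(q-p)$ and use $|L(x)-L(x')|^{s}\leq K^{\gamma} d_X(x,x')^{\beta\gamma}$ for any $\gamma\leq s$, choosing $\beta\gamma=p$ when $p/\beta\leq s$, and $\gamma=s$ otherwise. In the first case, the second factor is $(K^{p/\beta}\varepsilon^p)^{1-p/q}$, giving $\varepsilon^{p(1-p/q)}$. In the second case, it is $\le(K^s\varepsilon^{\beta s})^{1-p/q}$ via Jensen, giving $\varepsilon^{\beta}$. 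In both cases we obtain $I_1\lesssim\varepsilon^{p\min\{\beta/p,\,1-p/q\}}=\varepsilon^{p\alpha}$, which is exactly the claimed rate.

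Collecting the three terms gives $W_p(\mathcal B_L(\mu),\mathcal B_L(\nu))^p\leq C\varepsilon^{p\alpha}$. Taking $p$-th roots yields the claim, with $C_{\rm stab}$ depending only on $\beta,K,p,q,B,\zeta$. I expect the bookkeeping in the $I_1$ Hölder step, namely selecting the interpolation exponent so that the $d_X(x,x')$ power does not exceed $p$, to be the delicate point. The bound on $I_0$ is a special case of the same argument.
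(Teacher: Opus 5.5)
Your proposal is correct and follows the paper's proof. Both apply Lemma~\ref{lem:posterior_coupling_estimate}, use H\"older's inequality with exponents $q/p$ and $q/(q-p)$ on the weighted term, and interpolate $|L(x)-L(x')|$ between $1$ and $Kd_X(x,x')^\beta$ so that the power of $d_X(x,x')$ stays at most $p$. Your two cases $\gamma=p/\beta$ and $\gamma=s$ are exactly the paper's single bound $|L(x)-L(x')|\leq K^{p\alpha/\beta}d_X(x,x')^{p\alpha}$ raised to the power $q/(q-p)$.

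The remaining differences are cosmetic. You reduce to $W_p(\mu,\nu)\leq1$ and bound the posterior distance by a constant otherwise. The paper instead uses $W_p(\mu,\nu)\leq 2B^{1/q}$ to get $W_p(\mu,\nu)^p\lesssim W_p(\mu,\nu)^{p\alpha}$, and applies the interpolated bound directly to the unweighted term. Your truncation detour can simply be dropped.
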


\begin{proof}
If $B=0$, the prior class contains at most $\delta_{x_*}$ and the result is immediate. Suppose $B>0$, and let $\pi$ be a $W_p$-optimal coupling of $\mu$ and $\nu$. Since $p\alpha\leq\beta$,
\begin{equation*}
     |L(x)-L(x')|
 \leq\min\{1,K d_X(x,x')^\beta\}
 \leq K^{p\alpha/\beta}d_X(x,x')^{p\alpha}.
\end{equation*}
Because $p\alpha\leq p$, Lyapunov's inequality gives
\begin{equation*}
     \int_{X\times X}|L(x)-L(x')|\,\pi(\mathrm{d}x,\mathrm{d}x')
 \leq K^{p\alpha/\beta}W_p(\mu,\nu)^{p\alpha}.
\end{equation*}
Moreover, H\"older's, Minkowski's, and Lyapunov's inequalities, together
with $p\alpha q/(q-p)\leq p$, give
\begin{align*}
&\int_{X\times X}(d_X(x,x_*)^p+d_X(x',x_*)^p)
 |L(x)-L(x')|\,\pi(\mathrm{d}x,\mathrm{d}x')\\
&\quad\leq
 \left(\int_{X\times X}(d_X(x,x_*)^p+d_X(x',x_*)^p)^{q/p}
 \,\pi(\mathrm{d}x,\mathrm{d}x')\right)^{p/q}\\
&\qquad\times
 \left(\int_{X\times X}|L(x)-L(x')|^{q/(q-p)}
 \,\pi(\mathrm{d}x,\mathrm{d}x')\right)^{1-p/q}\\
&\quad\leq
 2B^{p/q}K^{p\alpha/\beta}W_p(\mu,\nu)^{p\alpha}.
\end{align*}

Finally,
\begin{equation*}
     m_p(\mu)+m_p(\nu)\leq2B^{p/q},
 \qquad
 W_{p}(\mu,\nu)\leq m_p(\mu)^{1/p}+m_p(\nu)^{1/p}\leq2B^{1/q},
\end{equation*}
and hence
\begin{equation*}
      W_{p}(\mu,\nu)^p
 \leq2^{p(1-\alpha)}B^{p(1-\alpha)/q} W_{p}(\mu,\nu)^{p\alpha}.
\end{equation*}
Substitution in the estimate in Lemma~\ref{lem:posterior_coupling_estimate} gives
\begin{equation*}
     W_p\bigl(\mathcal B_L(\mu),\mathcal B_L(\nu)\bigr)^p
 \leq\zeta^{-1}\left(
2^{p(1-\alpha)}B^{p(1-\alpha)/q}
 +2^pB^{p/q}K^{p\alpha/\beta}
 \left(1+\zeta^{-1}\right)
 \right) W_{p}(\mu,\nu)^{p\alpha}.
\end{equation*}
Taking both sides to the power $1/p$ proves the estimate.
\end{proof}

\begin{remark}[The Two Exponent Regimes]
The two branches of $\alpha$ describe distinct limitations of the estimate. The branch $\beta/p$ arises from likelihood reweighting: a prior perturbation of $W_p$ size $h$ can change the posterior mass assigned to a region by order $h^\beta$. If this mass imbalance must be transported over a distance of order one, its contribution to $W_p$ is of order $h^{\beta/p}$. This effect persists for priors supported in a common bounded set. The branch $1-p/q$ accounts for small amounts of prior mass at large distances. A uniform $q$-th moment bound limits their size, but likelihood reweighting can still require transporting some of this mass over those distances. Thus increasing $q$ can improve the exponent up to the limit $\beta/p$. Examples~\ref{ex:escaping_tail} and \ref{ex:holder_sharpness} make these two limitations explicit.
\end{remark}

\begin{remark}[Limits of Moment Assumptions]
For a globally Lipschitz likelihood, the exponent approaches $1$ as $q\to\infty$ when $p=1$, but is at most $1/p$ when $p>1$. Example~\ref{ex:holder_sharpness} shows that bounded support alone does not remove this latter obstruction. Section~\ref{sec:lipschitz_stability} therefore considers Poincar\'e bounds, which control the geometry of the priors, together with bounded oscillation of the potential on their supports.
\end{remark}

\subsection{Likelihood Regularity and Exponent Sharpness}
\label{sec:holder_sharpness}

The first example rules out a uniform power modulus under local Lipschitz continuity; the last two show that both branches of the exponent in Theorem~\ref{thm:holder_stability} are sharp. We write $u_n\sim v_n$ if $u_n/v_n\to1$ as $n\to\infty$, and $u_n\asymp v_n$ if their ratio is bounded above and away from zero as $n\to\infty$; analogous notation is used as $h\downarrow0$.

\begin{example}[Local Lipschitz Continuity Gives No Uniform Power Modulus]
\label{ex:local_no_holder}
A locally Lipschitz likelihood can change by order one across intervals whose widths shrink rapidly at infinity. Let $X=\mathbb R$ and, for $n\geq1$, define
\begin{equation*}
 T_n(x)\coloneqq\bigl(1-2^n|x-n|\bigr)_+,
 \qquad
 L(x)\coloneqq\frac{2+\sum_{n=1}^\infty T_n(x)}{3}.
\end{equation*}
The bump supports $[n-2^{-n},n+2^{-n}]$ are disjoint and locally finite, so $L$ is locally Lipschitz and $2/3\leq L\leq1$. For $r_n=n-2^{-n}$ and $s_n=n$, we have $L(0)=L(r_n)=2/3$ and $L(s_n)=1$, although $s_n-r_n=2^{-n}\to0$ as $n\to\infty$. Hence $L$ is not globally H\"older continuous of any order in $(0,1]$.

With $\varepsilon_n=n^{-q}$, define
\begin{equation*}
 \mu_n=(1-\varepsilon_n)\delta_0+\varepsilon_n\delta_{r_n},
 \qquad
 \nu_n=(1-\varepsilon_n)\delta_0+\varepsilon_n\delta_{s_n}.
\end{equation*}
Since $m_q(\mu_n)=n^{-q}(n-2^{-n})^q\leq1$ and $m_q(\nu_n)=1$, the uniform $q$-th moment bound and $q>p$ imply uniform integrability of the prior $p$-th moments. Moreover, $Z_{\mu_n}=2/3$, $Z_{\nu_n}=(2+\varepsilon_n)/3$, and $W_p(\mu_n,\nu_n)=2^{-n}n^{-q/p}$.

The Bayesian update leaves $\mu_n$ unchanged and increases the tail mass of $\nu_n$ to $3\varepsilon_n/(2+\varepsilon_n)$. Thus the shared mass $\varepsilon_n$ moves from $r_n$ to $s_n$, while the excess $\varepsilon_n(1-\varepsilon_n)/(2+\varepsilon_n)$ moves from $0$ to $s_n$. Formula~\eqref{eq:two_atom_Wp} gives
\begin{equation*}
 W_p\bigl(\mathcal B_L(\mu_n),\mathcal B_L(\nu_n)\bigr)^p
 =\varepsilon_n2^{-np}
 +\frac{\varepsilon_n(1-\varepsilon_n)}{2+\varepsilon_n}n^p
 \sim\frac12n^{p-q}
 \quad\text{as }n\to\infty.
\end{equation*}
Consequently,
\begin{equation*}
 W_p\bigl(\mathcal B_L(\mu_n),\mathcal B_L(\nu_n)\bigr)
 \sim2^{-1/p}n^{1-q/p}\longrightarrow0
 \quad\text{as }n\to\infty.
\end{equation*}
As $n\to\infty$, this decay is polynomial, whereas the prior distance decays exponentially. Indeed, for every $\gamma>0$,
\begin{equation*}
 \frac{W_p(\mathcal B_L(\mu_n),\mathcal B_L(\nu_n))}
      {W_p(\mu_n,\nu_n)^\gamma}
 \asymp 2^{n\gamma}n^{1-q/p+\gamma q/p}\longrightarrow\infty
 \quad\text{as }n\to\infty.
\end{equation*}
Thus no positive power of the prior distance is a uniform modulus along these pairs.
\end{example}

\begin{example}[Sharpness of the Exponent $1-p/q$]
\label{ex:escaping_tail}
We revisit the escaping-tail mechanism in Example~\ref{ex:pth_moment_tail}, now reducing the tail mass from $R_n^{-p}$ to $R_n^{-q}$ to impose a uniform $q$-th moment bound. Let $X=\mathbb R$ and set
\begin{equation*}
 L(x)=\frac{2+\sin(\pi x/2)}{3},
 \qquad
 R_n=4n,
 \qquad
 a_n=R_n^{-q}.
\end{equation*}
Then $L(0)=L(R_n)=2/3$ and $L(R_n+1)=1$. Being bounded and globally Lipschitz, $L$ is globally $\beta$-H\"older.
Define
\begin{equation*}
 \mu_n=(1-a_n)\delta_0+a_n\delta_{R_n},
 \qquad
 \nu_n=(1-a_n)\delta_0+a_n\delta_{R_n+1}.
\end{equation*}
The evidences are $Z_{\mu_n}=2/3$ and $Z_{\nu_n}=(2+a_n)/3$, while $m_q(\mu_n)=1$, $m_q(\nu_n)=(1+R_n^{-1})^q$, and $W_p(\mu_n,\nu_n)=R_n^{-q/p}\to0$ as $n\to\infty$. The Bayesian update changes the tail mass from $a_n$ to $3a_n/(2+a_n)$; the shared mass moves from $R_n$ to $R_n+1$, while the excess $a_n(1-a_n)/(2+a_n)$ moves from $0$ to $R_n+1$. Therefore
\begin{equation*}
 W_p\bigl(\mathcal B_L(\mu_n),\mathcal B_L(\nu_n)\bigr)^p
 =a_n
 +\frac{a_n(1-a_n)}{2+a_n}(R_n+1)^p
 \sim\frac12R_n^{p-q}
 \quad\text{as }n\to\infty.
\end{equation*}
Consequently, $W_p(\mathcal B_L(\mu_n),\mathcal B_L(\nu_n))\sim2^{-1/p}W_p(\mu_n,\nu_n)^{1-p/q}$ as $n\to\infty$. Hence, when $1-p/q\leq\beta/p$, no larger exponent can hold uniformly for these pairs.
\end{example}

\begin{example}[Sharpness of the Exponent $\beta/p$]
\label{ex:holder_sharpness}
As $h\downarrow0$, a displacement of size $h$ changes the likelihood by order $h^\beta$. The Bayesian update turns this change into a posterior mass imbalance of the same order, transported over a distance of order one. Let $X=[0,1]$ and set
\begin{equation*}
 L(x)\coloneqq \frac{1+x^\beta}{2},
 \qquad
 \mu\coloneqq \frac12\delta_0+\frac12\delta_1,
 \qquad
 \nu_h\coloneqq \frac12\delta_h+\frac12\delta_1,
 \quad 0<h<1.
\end{equation*}
Since $|x^\beta-y^\beta|\leq|x-y|^\beta$ on $[0,1]$, the likelihood is globally $\beta$-H\"older with constant $1/2$. All these priors are supported on $[0,1]$, so their moments of every order are uniformly bounded. Their evidences are $Z_\mu=3/4$ and $Z_{\nu_h}=(3+h^\beta)/4\geq3/4$, and $W_p(\mu,\nu_h)=2^{-1/p}h\to0$ as $h\downarrow0$. The posterior mass at the lower atom increases from $1/3$ to $(1+h^\beta)/(3+h^\beta)$, an excess of $\frac{2h^\beta}{3(3+h^\beta)}$.
Under monotone transport, mass $1/3$ moves from $0$ to $h$, while this excess mass moves from $1$ to $h$. Hence
\begin{equation*}
 W_p\!\left(
 \mathcal B_L(\mu),
 \mathcal B_L(\nu_h)
 \right)^p
 =\frac{h^p}{3}
 +\frac{2h^\beta}{3(3+h^\beta)}(1-h)^p
 \asymp h^\beta
 \quad\text{as }h\downarrow0.
\end{equation*}
As $h\downarrow0$, the second term is asymptotic to $(2/9)h^\beta$, while $h^p=O(h^\beta)$ since $p\geq1\geq\beta$. Therefore $W_p(\mathcal B_L(\mu),\mathcal B_L(\nu_h))\asymp W_p(\mu,\nu_h)^{\beta/p}$ as $h\downarrow0$.
When $\beta/p\leq1-p/q$, the exponent in Theorem~\ref{thm:holder_stability} is $\beta/p$, and no larger exponent can hold uniformly for these pairs, even though all the priors are supported on $[0,1]$.
\end{example}

\section{Lipschitz Stability}\label{sec:lipschitz_stability}

In this section, we present our third contribution on $W_p$-Lipschitz stability with $p>1$. We replace the moment bounds used in Section~\ref{sec:holder_stability} by uniform bounds on the Poincar\'e constant and the variation of the potential under each prior. After introducing these conditions, we prove the stability estimate in Theorem~\ref{thm:lipschitz_stability} and discuss its applications.

\subsection{Poincar\'e Constant}

For a bounded Lipschitz function $f:X\to\mathbb R$, denote its pointwise local Lipschitz constant by
\begin{equation*}
     |\mathrm D f|(x)
 \coloneqq\limsup_{\substack{x'\to x\\x'\neq x}}
 \frac{|f(x')-f(x)|}{d_X(x',x)},
\end{equation*}
with value zero at isolated points. When $X$ is a Banach space and $f$ is Fr\'echet differentiable at $x$, $|\mathrm D f|(x)=\|Df(x)\|_{X^*}$, where $X^*$ is the topological dual of $X$. For $r>1$ and $\mu\in\mathcal P(X)$, define the $L^r$-Poincar\'e constant by
\begin{equation}\label{eq:lp_poincare}
\begin{aligned}
 C_{\mathrm{P},r}(\mu)\coloneqq\inf\Big\{C\geq0\Bigm|&
 \inf_{a\in\mathbb R}\int_X|f(x)-a|^r\,\mu(\mathrm{d}x)
 \leq C\int_X|\mathrm Df|^r(x)\,\mu(\mathrm{d}x)\\
 &\text{for every bounded Lipschitz }f:X\to\mathbb R\Big\},
\end{aligned}
\end{equation}
where the infimum of the empty set is $+\infty$.

For $p>1$, uniform control of this constant with $r=p'\coloneqq p/(p-1)$ will be central to establishing $W_p$-Lipschitz stability. Poincar\'e constants control the variation of functions through their pointwise local Lipschitz constants. On normed spaces, they are translation invariant, unlike moment bounds. For example, the uniform distributions on $[n,n+1]$ have the same finite $L^r$-Poincar\'e constant for all $n\in\mathbb N$, while their moments of every positive order diverge as $n\to\infty$. Conversely, a probability measure whose support consists of two distinct points has bounded moments of every order but an infinite Poincar\'e constant, since a bounded Lipschitz function can take different values at the two points while being constant in a neighborhood of each.

\subsection[Essential Oscillation]{$\mu$-Essential Oscillation}

For $\mu\in\mathcal P(X)$ and a real-valued measurable function $f$, define its $\mu$-essential oscillation by
\begin{equation*}
 \operatorname{osc}_{\mu}(f)
 \coloneqq\operatorname*{ess\,sup}_{\mu}f
   -\operatorname*{ess\,inf}_{\mu}f\in[0,\infty].
\end{equation*}
Here the essential supremum and essential infimum are taken with respect to $\mu$. 

We impose a uniform bound on the $\mu$-essential oscillation of the \emph{potential} $\Phi:X\to[0,\infty)$, which defines the likelihood through $L\coloneqq e^{-\Phi}$. This bound controls likelihood ratios on the part of $X$ seen by each prior; in particular,
\begin{equation*}
 \operatorname{osc}_{\mu}(\Phi)\leq M
 \quad\Longrightarrow\quad
 e^{-M}\leq\frac{L(x)}{Z_\mu}\leq e^M
 \quad\text{for }\mu\text{-almost every }x\in X.
\end{equation*}
Like the posterior, essential oscillation is unchanged when a constant is added to the potential. It therefore measures the relative variation of the likelihood within each prior. The following example distinguishes an oscillation bound under each prior from a bound on the potential over the whole parameter space.

\begin{example}[Interval Priors with Additive Laplace Noise]
\label{ex:laplace_oscillation}
Let $X=\mathbb R$ and consider $y=x+\eta$, where $\eta\sim\operatorname{Laplace}(0,b)$ with $b>0$ is independent of $x$. For an observation $y\in\mathbb R$, the normalized potential is $\Phi(x;y)=|y-x|/b$. Fix $D>0$ and let $\mu_a$ be the uniform distribution on $[a,a+D]$, for $a\in\mathbb R$. The reverse triangle inequality gives $\operatorname{osc}_{\mu_a}(\Phi(\dummy;y))\leq D/b$ for every $a\in\mathbb R$. The bound is uniform over this prior family, whose supports cover $\mathbb R$, even though $\Phi$ is unbounded on $\mathbb R$.
\end{example}

\subsection{Main Result}

The next theorem gives $W_p$-Lipschitz stability for $p>1$ under a globally Lipschitz potential and uniform Poincar\'e and essential-oscillation bounds on the prior class.

\begin{theorem}[Lipschitz Stability]
\label{thm:lipschitz_stability}
Let $\Phi:X\to[0,\infty)$ be globally Lipschitz with $K\coloneqq\operatorname{Lip}(\Phi)$ and $\inf_{x\in X}\Phi(x)=0$, and set $L\coloneqq e^{-\Phi}$. Let $p>1$ and $p'\coloneqq p/(p-1)$. For $B,M\geq0$, define
\begin{equation}\label{eq:lipschitz_prior_class}
 \mathfrak C_p(B,M;\Phi)
 \coloneqq\left\{\mu\in\mathcal P_p(X)\,\middle|\,
 C_{\mathrm{P},p'}(\mu)\leq B,\,
 \operatorname{osc}_{\mu}(\Phi)\leq M\right\}.
\end{equation}
Then, for all $\mu,\nu\in\mathfrak C_p(B,M;\Phi)$,
\begin{equation}\label{eq:lipschitz_stability}
 W_p\bigl(\mathcal B_L(\mu),\mathcal B_L(\nu)\bigr)
 \leq\left(e^{M/p}+\gamma_p K B^{1/p'}e^M\right)W_p(\mu,\nu),
\end{equation}
where $\gamma_p<2$ and $\gamma_p=1$ when $p=2$.
\end{theorem}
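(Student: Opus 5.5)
The plan is to reweight a $W_p$-optimal coupling $\pi$ of $\mu$ and $\nu$ and split the posterior distance by the triangle inequality into a ``static'' part and a ``dynamic'' part. Define the interpolating weights $w_t(x,x')\coloneqq L(x)^{1-t}L(x')^{t}=e^{-(1-t)\Phi(x)-t\Phi(x')}$ for $t\in[0,1]$, and the normalized couplings $\pi_t\coloneqq w_t\pi/\pi(w_t)$. The first marginal of $\pi_0$ is $\mathcal B_L(\mu)$, and the second marginal of $\pi_1$ is $\mathcal B_L(\nu)$. Let $\rho_t$ denote the first marginal of $\pi_t$. Then
\begin{equation*}
 W_p\bigl(\mathcal B_L(\mu),\mathcal B_L(\nu)\bigr)\leq W_p(\rho_0,\rho_1)+W_p\bigl(\rho_1,\mathcal B_L(\nu)\bigr).
\end{equation*}

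\textbf{Static term.} The coupling $\pi_1$ couples $\rho_1$ with $\mathcal B_L(\nu)$, and its density relative to $\pi$ is $L(x')/Z_\nu\leq e^{M}$ for $\nu$-a.e.\ $x'$, by the oscillation bound. Hence
\begin{equation*}
 W_p\bigl(\rho_1,\mathcal B_L(\nu)\bigr)^p\leq e^M\int d_X(x,x')^p\,\pi(\mathrm dx,\mathrm dx')=e^M W_p(\mu,\nu)^p,
\end{equation*}
which produces the term $e^{M/p}W_p(\mu,\nu)$.

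\textbf{Dynamic term.} The curve $\rho_t$ consists of measures absolutely continuous with respect to $\mu$. Using $\operatorname{osc}_\mu(\Phi),\operatorname{osc}_\nu(\Phi)\leq M$, the density of $\pi_t$ with respect to $\pi$ lies between $e^{-M}$ and $e^{M}$ $\pi$-a.e. Differentiating in $t$ gives
\begin{equation*}
 \partial_t\pi_t=\bigl(\Phi(x)-\Phi(x')-\pi_t[\Phi(x)-\Phi(x')]\bigr)\pi_t .
\end{equation*}
So, for a bounded Lipschitz test function $g$,
\begin{equation*}
 \frac{\mathrm d}{\mathrm dt}\int g\,\mathrm d\rho_t=\int (g(x)-a)\,\psi_t(x,x')\,\pi_t(\mathrm dx,\mathrm dx')
\end{equation*}
for any constant $a$, where $\psi_t$ is the centered increment $\Phi(x)-\Phi(x')$. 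This holds because $\psi_t$ has $\pi_t$-mean zero. H\"older's inequality bounds this derivative by $\|g-a\|_{L^{p'}(\rho_t)}\|\psi_t\|_{L^p(\pi_t)}$. Then:
\begin{itemize}
 \item Optimizing over $a$ and applying the $L^{p'}$-Poincar\'e inequality bounds the first factor by $C_{\mathrm P,p'}(\rho_t)^{1/p'}\|\,|\mathrm Dg|\,\|_{L^{p'}(\rho_t)}$.
 \item The second factor is at most $\gamma_pK\bigl(\int d_X^p\,\mathrm d\pi_t\bigr)^{1/p}$. Here $\gamma_p$ is the constant for removing the mean in $L^p$: it is at most $2$ in general and equals $1$ for $p=2$, since centering is an orthogonal projection.
\end{itemize}
The density bounds turn $\int d_X^p\,\mathrm d\pi_t$ into a multiple of $W_p(\mu,\nu)^p$. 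Through a Holley--Stroock type perturbation argument they also bound $C_{\mathrm P,p'}(\rho_t)$ by $B$ times a factor in $e^{M}$. The powers of $e^M$ should be bookkept, for instance by taking the Poincar\'e inequality under $\mu$ and changing measure once, so that the total factor is $e^M$.

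\textbf{From the derivative bound to $W_p$.} The estimate above says that the metric speed of $\rho_t$ in the dual (negative Sobolev) norm
\begin{equation*}
 \sup\Bigl\{\tfrac{\mathrm d}{\mathrm dt}\rho_t(g)\Bigm|\ \|\,|\mathrm Dg|\,\|_{L^{p'}(\rho_t)}\leq1\Bigr\}
\end{equation*}
is at most $\gamma_pKB^{1/p'}e^MW_p(\mu,\nu)$. The final step converts this into $W_p(\rho_0,\rho_1)\leq\int_0^1(\text{speed})\,\mathrm dt$. I would do this through Kantorovich duality with Hopf--Lax evolved potentials $Q_sg$ along the curve, in the style of Kuwada's duality or the Benamou--Brenier/Ambrosio--Gigli--Savar\'e metric-space theory. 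One differentiates $s\mapsto\int Q_s g\,\mathrm d\rho_s$ and uses the Hamilton--Jacobi subsolution property $\partial_sQ_sg+\tfrac1{p'}|\mathrm DQ_sg|^{p'}\leq0$ together with Young's inequality.

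\textbf{Main obstacle.} I expect this last step to be the main difficulty. It requires a rigorous negative-Sobolev upper bound on $W_p$ in a general complete separable metric space using only the pointwise slope $|\mathrm Df|$ and bounded Lipschitz test functions. Two technical points need care: ensuring that $t\mapsto\rho_t(g)$ is absolutely continuous, and the regularity of $Q_sg$. Getting the exact constant $e^M$, rather than a worse power, in the Poincar\'e transfer is a secondary bookkeeping issue.
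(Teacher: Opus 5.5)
Your proposal is correct and follows the paper's proof essentially step for step. It uses the same geometric interpolation of the likelihood on a $W_p$-optimal coupling, the same static term bounded by $e^{M/p}W_p(\mu,\nu)$, and the same Holley--Stroock transfer $C_{\mathrm P,p'}(\rho_t)\leq e^MB$ combined with $e^{M/p}e^{M/p'}=e^M$. The final Hopf--Lax/Hamilton--Jacobi/Young step is also the paper's route: the paper isolates it as its dual-criterion lemma, which additionally uses the uniform domination $\rho_t\leq e^M\mu$ that your density bounds already provide.

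The differences are cosmetic: you run the curve along the first marginal instead of the second. You also center $\Phi(x)-\Phi(x')$ in $L^p(\pi_t)$ rather than the test function in $L^{p'}$, which gives the same constant since the centering projection is self-dual. One small point: the trivial estimate $2$ does not give the strict bound $\gamma_p<2$ in the statement. Riesz--Thorin interpolation between $p=2$ and $p\in\{1,\infty\}$ does, giving $\gamma_p\leq2^{|1-2/p|}$.
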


\begin{proof}
We first outline the proof strategy. Fix $\mu,\nu\in\mathfrak C_p(B,M;\Phi)$ and a $W_p$-optimal coupling $\pi\in\Pi(\mu,\nu)$. For $t\in[0,1]$, define
\begin{gather*}
 F_t(x,x')\coloneqq e^{-t\Phi(x)-(1-t)\Phi(x')},
 \qquad
 Z_t\coloneqq\int_{X\times X}F_t(x,x')\,\pi(\mathrm{d}x,\mathrm{d}x'),\\
 \pi_t(\mathrm{d}x,\mathrm{d}x')\coloneqq
 Z_t^{-1}F_t(x,x')\,\pi(\mathrm{d}x,\mathrm{d}x'),
\end{gather*}
and let $\rho_t^{(1)}$ and $\rho_t^{(2)}$ denote the first and second marginals of $\pi_t$, respectively. Since $F_1(x,x')=L(x)$ and $F_0(x,x')=L(x')$,
\begin{equation*}
 \rho_1^{(1)}=\mathcal B_L(\mu),
 \qquad
 \rho_0^{(2)}=\mathcal B_L(\nu).
\end{equation*}
The triangle inequality therefore gives the decomposition
\begin{equation}
\label{eq:lipschitz_triangle_split}
 W_p\bigl(\mathcal B_L(\mu),\mathcal B_L(\nu)\bigr)
 \leq W_p\bigl(\rho_1^{(1)},\rho_1^{(2)}\bigr)
     +W_p\bigl(\rho_1^{(2)},\rho_0^{(2)}\bigr).
\end{equation}
We control the two terms using $W_p(\mu,\nu)$ separately, relying on the fact that $Z_t^{-1}F_t$ is uniformly bounded from above and below $\pi$-almost everywhere due to the bounded essential oscillation of $\Phi$. While the estimate for the first term is straightforward, the second term requires an estimate along the interpolating curve. We use
Lemma~\ref{lem:dual_wasserstein_speed}, which gives
\begin{equation}
\label{eq:lipschitz_second_marginal_target}
 W_p\bigl(\rho_s^{(2)},\rho_t^{(2)}\bigr)
 \leq (t-s) C^{(2)}_{\rm stab} W_{p}(\mu,\nu),
 \qquad 0\leq s\leq t\leq1,
\end{equation}
provided the following three conditions hold for some $C^{(2)}_{\rm stab}\geq 0$:
\begin{enumerate}[label=(\roman*)]
    \item $\rho_t^{(2)}\in\mathcal P_p(X)$ for every $t\in[0,1]$;
    \item there exist $\vartheta\in\mathcal P(X)$ and $C_{\rm dom}>0$ such that $\rho_t^{(2)}\leq C_{\rm dom}\vartheta$ for every $t\in[0,1]$;
    \item for every bounded Lipschitz function $f:X\to\mathbb R$, the map $t\mapsto\rho_t^{(2)}(f)$ is absolutely continuous and satisfies
\begin{equation}
\label{eq:lipschitz_derivative_target}
 \left|\frac{\mathrm d}{\mathrm dt}\rho_t^{(2)}(f)\right|
 \leq C^{(2)}_{\rm stab} W_{p}(\mu,\nu)
 \bigl\||\mathrm Df|\bigr\|_{L^{p'}_{\rho_t^{(2)}}(X)}
 \quad\text{for almost every }t\in(0,1).
\end{equation}
\end{enumerate}
We first estimate the first term in \eqref{eq:lipschitz_triangle_split} and then verify conditions (i)--(iii) to control the second term.

For $\lambda\in\{\mu,\nu\}$, let $a_\lambda\coloneqq\operatorname*{ess\,inf}_\lambda\Phi$ and $b_\lambda\coloneqq\operatorname*{ess\,sup}_\lambda\Phi$. These quantities are finite, and $b_\lambda-a_\lambda\leq M$. For $\pi$-almost every $(x,x')$, the value $F_t(x,x')$ lies between $e^{-tb_\mu-(1-t)b_\nu}$ and $e^{-ta_\mu-(1-t)a_\nu}$. The same bounds hold for $Z_t$. Their ratio is at most $e^M$, and since $\mathrm d\pi_t/\mathrm d\pi=F_t/Z_t$, it follows that
\begin{equation}\label{eq:rn_bound}
 e^{-M}\leq Z_t^{-1}F_t(x,x')\leq e^M
 \quad\text{for }\pi\text{-almost every }(x,x')\in X\times X.
\end{equation}
At $t=1$, the coupling $\pi_1$ has marginals $\rho_1^{(1)}$ and $\rho_1^{(2)}$. Hence
\begin{equation}\label{eq:lipschitz_coupling_term}
 W_p\bigl(\rho_1^{(1)},\rho_1^{(2)}\bigr)
 \leq\left(\int_{X\times X}d_X(x,x')^p\,\pi_1(\mathrm{d}x,\mathrm{d}x')\right)^{1/p}
 \leq e^{M/p}W_p(\mu,\nu).
\end{equation}
We thus have the estimate for the first term in \eqref{eq:lipschitz_triangle_split}.

Projecting the upper bound in \eqref{eq:rn_bound} onto the two coordinates gives $\rho_t^{(1)}\leq e^M\mu$ and $\rho_t^{(2)}\leq e^M\nu$ for $0\leq t\leq1$. In particular,
\begin{equation}
\label{eq:lipschitz_second_marginal_moment}
 \sup_{t\in[0,1]}\int_X d_X(x',x_*)^p\,\rho_t^{(2)}(\mathrm{d}x')
 \leq e^M m_p(\nu)<\infty.
\end{equation}
Thus $\rho_t^{(2)}\in\mathcal P_p(X)$ for every $t\in[0,1]$, which verifies (i). The projected density bound verifies (ii) with $\vartheta=\nu$ and $C_{\rm dom}=e^M$.

Disintegrate $\pi(\mathrm{d}x,\mathrm{d}x')=\pi_{x'}(\mathrm{d}x)\nu(\mathrm{d}x')$ and set
\begin{equation*}
 h_t(x')\coloneqq e^{-(1-t)\Phi(x')}
 \int_X e^{-t\Phi(x)}\,\pi_{x'}(\mathrm{d}x).
\end{equation*}
For $\nu$-almost every $x'\in X$, we have $e^{-tb_\mu-(1-t)b_\nu}\leq h_t(x')
 \leq e^{-ta_\mu-(1-t)a_\nu}$. Thus $q_t\coloneqq\mathrm d\rho_t^{(2)}/\mathrm d\nu=h_t/Z_t$ has positive essential infimum $\underline q_t$ and finite essential supremum $\overline q_t$, with $\overline q_t/\underline q_t\leq e^M$. For $r=p'$ and every bounded Lipschitz function $f:X\to\mathbb R$, we obtain
\begin{align*}
 \inf_{a\in\mathbb R}\int_X|f(x')-a|^r\,\rho_t^{(2)}(\mathrm{d}x')
 &\leq\overline q_t\inf_{a\in\mathbb R}
       \int_X|f(x')-a|^r\,\nu(\mathrm{d}x')\\
 &\leq\overline q_t C_{\mathrm{P},r}(\nu)
       \int_X|\mathrm Df|^r(x')\,\nu(\mathrm{d}x')\\
 &\leq\frac{\overline q_t}{\underline q_t}C_{\mathrm{P},r}(\nu)
       \int_X|\mathrm Df|^r(x')\,\rho_t^{(2)}(\mathrm{d}x').
\end{align*}
Consequently,
\begin{equation}\label{eq:rho_t_poincare}
 C_{\mathrm{P},p'}(\rho_t^{(2)})
 \leq e^M C_{\mathrm{P},p'}(\nu)
 \leq e^M B.
\end{equation}
Since $0<F_t\leq1$,
\begin{equation*}
 \partial_tF_t(x,x')
 =\bigl(\Phi(x')-\Phi(x)\bigr)F_t(x,x'),
 \qquad
 |\partial_tF_t(x,x')|
 \leq|\Phi(x)-\Phi(x')|
 \leq K d_X(x,x'),
\end{equation*}
and the last expression is $\pi$-integrable because $p>1$ and $\pi$ has finite $p$-transport cost. Dominated convergence shows that $Z_t$ is continuous. Since $Z_t>0$ for every $t\in[0,1]$, compactness gives $\inf_{t\in[0,1]}Z_t>0$. For a bounded Lipschitz function $f:X\to\mathbb R$, set
\begin{equation*}
 N_t(f)\coloneqq\int_{X\times X}f(x')F_t(x,x')\,\pi(\mathrm{d}x,\mathrm{d}x'),
 \qquad
 \rho_t^{(2)}(f)=\frac{N_t(f)}{Z_t}.
\end{equation*}
This bound, and its product with $\|f\|_\infty$ for $N_t(f)$, provide integrable dominating functions that justify differentiation under the two integrals. Dominated convergence applied to the resulting derivatives shows that $N_t(f)$ and $Z_t$ are continuously differentiable. Since $Z_t$ is bounded away from zero, the map $t\mapsto\rho_t^{(2)}(f)$ is continuously differentiable and therefore absolutely continuous. This verifies the absolute-continuity part of (iii).

The quotient rule gives
\begin{align*}
 \frac{\mathrm d}{\mathrm dt}\rho_t^{(2)}(f)
 =\int_{X\times X}
 \bigl(f(x')-\rho_t^{(2)}(f)\bigr)
 \bigl(\Phi(x')-\Phi(x)\bigr)\,\pi_t(\mathrm{d}x,\mathrm{d}x').
\end{align*}
By \eqref{eq:rn_bound} and the Lipschitz bound on $\Phi$,
\begin{equation*}
 \left(\int_{X\times X}|\Phi(x)-\Phi(x')|^p\,\pi_t(\mathrm{d}x,\mathrm{d}x')\right)^{1/p}
 \leq K e^{M/p}W_p(\mu,\nu).
\end{equation*}
Applying H\"older's inequality, Lemma~\ref{lem:sharp_centering}, and the Poincar\'e bound \eqref{eq:rho_t_poincare} in sequence gives
\begin{align*}
 \left|\frac{\mathrm d}{\mathrm dt}\rho_t^{(2)}(f)\right|
 &\leq\|f-\rho_t^{(2)}(f)\|_{L^{p'}_{\rho_t^{(2)}}(X)}
       K e^{M/p}W_p(\mu,\nu)\\
 &\leq\gamma_p\inf_{a\in\mathbb R}\|f-a\|_{L^{p'}_{\rho_t^{(2)}}(X)}
       K e^{M/p}W_p(\mu,\nu)\\
 &\leq\gamma_p C_{\mathrm{P},p'}(\rho_t^{(2)})^{1/p'}
       \bigl\||\mathrm Df|\bigr\|_{L^{p'}_{\rho_t^{(2)}}(X)}
       K e^{M/p}W_p(\mu,\nu)\\
 &\leq\gamma_p K B^{1/p'}e^M W_p(\mu,\nu)
       \bigl\||\mathrm Df|\bigr\|_{L^{p'}_{\rho_t^{(2)}}(X)}.
\end{align*}
The final inequality uses \eqref{eq:rho_t_poincare} together with $e^{M/p}(e^M)^{1/p'}=e^M$, which follows from $1/p+1/p'=1$. Thus \eqref{eq:lipschitz_derivative_target} holds with $C^{(2)}_{\rm stab}=\gamma_p K B^{1/p'}e^M$, completing the verification of (iii).

All three conditions of Lemma~\ref{lem:dual_wasserstein_speed} are now satisfied, so \eqref{eq:lipschitz_second_marginal_target} follows. Taking $s=0$ and $t=1$ in that estimate and combining it with \eqref{eq:lipschitz_triangle_split} and \eqref{eq:lipschitz_coupling_term} yields \eqref{eq:lipschitz_stability}.
\end{proof}

\begin{remark}[Verification by Pushforward]
\label{rem:pushforward_verification}
Let $r>1$, let $\lambda\in\mathcal P(X)$ have a finite $L^r$-Poincar\'e constant, and let $T:X\to X$ be globally Lipschitz. The chain rule for pointwise local Lipschitz constants gives
\begin{equation*}
 C_{\mathrm{P},r}(T_\#\lambda)
 \leq\operatorname{Lip}(T)^r C_{\mathrm{P},r}(\lambda),
 \qquad
 \operatorname{osc}_{T_\#\lambda}(\Phi)
 =\operatorname{osc}_{\lambda}(\Phi\circ T),
\end{equation*}
where $T_{\#}\lambda$ denotes the pushforward of $\lambda$ under $T$. Pushforward representations are used in transport-based Bayesian computation and generative modeling \cite{elmoselhy2012bayesian,marzouk2017sampling,papamakarios2021normalizing,cao2026lazydino}. When such representations are used as priors, the conditions in \eqref{eq:lipschitz_prior_class} can be checked on a shared reference measure $\lambda\in\mathcal P_p(X)$: a finite $C_{\mathrm{P},p'}(\lambda)$, together with uniform bounds on $\operatorname{Lip}(T)$ and $\operatorname{osc}_{\lambda}(\Phi\circ T)$, suffices. For two such maps $T,S$, the coupling $(T,S)_\#\lambda$ gives
\begin{equation*}
 W_p(T_\#\lambda,S_\#\lambda)
 \leq\left(\int_X d_X(T(x),S(x))^p\,\lambda(\mathrm{d}x)\right)^{1/p}.
\end{equation*}
Theorem~\ref{thm:lipschitz_stability} then bounds the posterior error by the $L^p_{\lambda}$ norm of the pointwise distance between the maps.
\end{remark}

\subsection{Gaussian Priors and Additive Gaussian Noise}

In this subsection, we consider a commonly used setting where the prior is Gaussian and the observational noise is additive and Gaussian. Let $X$ be a separable Hilbert space of parameters, $Y=\mathbb{R}^d$ be the Euclidean space of observations, and $G:X\to Y$ be the forward model. Consider the observation model
\begin{equation}\label{eq:gaussian_likelihood}
 y=G(x)+\eta,
 \qquad \eta\sim\mathcal N(0,\Gamma),
 \qquad \Phi(x;y)=\tfrac12|y-G(x)|_\Gamma^2,
\end{equation}
where $\eta$ is independent of $x$, $\Gamma$ is positive definite, and $|z|_\Gamma\coloneqq|\Gamma^{-1/2}z|$ with $|\dummy|$ the Euclidean norm. Let $\|\dummy\|_{\mathrm{op}}$ be the operator norm. The following corollary provides a Lipschitz stability estimate for this setting.

\begin{corollary}[Gaussian Priors and Additive Gaussian Noise]
\label{cor:gaussian_prior_stability}
Suppose that $G:X\to Y$ is globally Lipschitz and $\sup_{x\in X}|G(x)|\leq R$ for some $R\geq0$. Fix $y\in Y$, define $\Phi(\dummy;y)$ by \eqref{eq:gaussian_likelihood}, and let $L\propto e^{-\Phi(\dummy;y)}$ be normalized as in \eqref{eq:likelihood_normalization}. For any $B\geq0$, there exists $C_{\mathrm{stab}}>0$, depending only on $B$, $R$, $\operatorname{Lip}(G)$, $|y|$, and $\|\Gamma^{-1}\|_{\mathrm{op}}$, such that
\begin{equation*}
 W_2\bigl(\mathcal B_L(\mu),\mathcal B_L(\nu)\bigr)
 \leq C_{\mathrm{stab}}W_2(\mu,\nu)
\end{equation*}
for all Gaussian priors $\mu=\mathcal N(m,C)$ and $\nu=\mathcal N(\widetilde m,\widetilde C)$ on $X$ with $\|C\|_{\mathrm{op}},\|\widetilde C\|_{\mathrm{op}}\leq B$.
\end{corollary}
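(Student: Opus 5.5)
The plan is to apply Theorem~\ref{thm:lipschitz_stability} with $p=2$, so $p'=2$ and $\gamma_2=1$. The work is to check that every Gaussian prior $\mathcal N(m,C)$ with $\|C\|_{\mathrm{op}}\leq B$ lies in a class $\mathfrak C_2(B',M;\Phi_0)$, where $\Phi_0$ is a shifted potential and $B'$ and $M$ depend only on the allowed data.

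\textbf{Normalized potential.} Set $\Phi_0\coloneqq\Phi(\cdot;y)-\inf_{x\in X}\Phi(x;y)$, so that $\inf\Phi_0=0$ and $L=e^{-\Phi_0}$ agrees with the normalization \eqref{eq:likelihood_normalization}. The posterior is unchanged by the shift. Since $|G(x)|\leq R$, every $x$ satisfies
\[
|y-G(x)|_\Gamma\leq\|\Gamma^{-1}\|_{\mathrm{op}}^{1/2}(|y|+R).
\]
Hence $\Phi_0$ is globally Lipschitz: the difference of squares factors, giving
\[
K\leq\|\Gamma^{-1}\|_{\mathrm{op}}(|y|+R)\operatorname{Lip}(G).
\]
The same bound gives $0\leq\Phi_0\leq\tfrac12\|\Gamma^{-1}\|_{\mathrm{op}}(|y|+R)^2$ on all of $X$. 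Consequently $\operatorname{osc}_\mu(\Phi_0)\leq M\coloneqq\tfrac12\|\Gamma^{-1}\|_{\mathrm{op}}(|y|+R)^2$ for every $\mu\in\mathcal P(X)$. This is where boundedness of $G$ enters: it makes the oscillation bound uniform for Gaussians whose means range over all of $X$.

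\textbf{Prior class.} Gaussian measures have moments of all orders, so $\mu\in\mathcal P_2(X)$. The main step is the Poincar\'e bound $C_{\mathrm P,2}(\mathcal N(m,C))\leq\|C\|_{\mathrm{op}}\leq B$ for bounded Lipschitz $f$, with the gradient measured by the pointwise local Lipschitz constant $|\mathrm Df|$.

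I would establish this in three steps.
\begin{enumerate}
\item Write $\mathcal N(m,C)=T_\#\mathcal N(0,C)$ with $T(x)=x+m$. By Remark~\ref{rem:pushforward_verification} the Poincar\'e constant is translation invariant.
\item Diagonalize $C$ and project onto finitely many coordinates. For a finite-dimensional Gaussian, the Gaussian Poincar\'e inequality gives $\operatorname{Var}(f)\leq\int\langle C\nabla f,\nabla f\rangle\leq\|C\|_{\mathrm{op}}\int|\nabla f|^2$ for smooth $f$. For Lipschitz $f$, use Rademacher's theorem together with $|\nabla f|\leq|\mathrm Df|$ almost everywhere, after mollification. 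Equivalently, write $\mathcal N(0,C_n)$ as the pushforward of a standard Gaussian under $C_n^{1/2}$, whose Lipschitz constant is at most $B^{1/2}$.
\item Pass to the infinite-dimensional limit. Apply the finite-dimensional inequality to the functions $f\circ P_n$, where $P_n$ is the projection onto the first $n$ eigenvectors; these satisfy $|\mathrm D(f\circ P_n)|(x)\leq|\mathrm Df|(P_nx)$ in a suitable averaged sense. Conclude by martingale convergence of the conditional expectations $\mathbb E[f\mid P_n x]$.
\end{enumerate}

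The limiting step is the main obstacle, because $|\mathrm Df|$ is only upper semicontinuous-type data. As a fallback, I would use the Ornstein--Uhlenbeck semigroup on $X$ directly: the commutation $|\mathrm D P_tf|\leq e^{-t}P_t|\mathrm Df|$, with $P_t$ defined through Mehler's formula on the Hilbert space, yields $\operatorname{Var}(f)\leq\|C\|_{\mathrm{op}}\int|\mathrm Df|^2\,\mathrm d\mu$ with no finite-dimensional limit needed.

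\textbf{Conclusion.} With $B'=B$, $M$ and $K$ as above, Theorem~\ref{thm:lipschitz_stability} gives
\[
W_2\bigl(\mathcal B_L(\mu),\mathcal B_L(\nu)\bigr)\leq\bigl(e^{M/2}+KB^{1/2}e^M\bigr)W_2(\mu,\nu).
\]
The constant depends only on $B$, $R$, $\operatorname{Lip}(G)$, $|y|$ and $\|\Gamma^{-1}\|_{\mathrm{op}}$, as claimed. No absolute continuity between $\mu$ and $\nu$ is used, so the estimate covers mutually singular Gaussian priors.
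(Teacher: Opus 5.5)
Your proposal is correct and follows the paper's proof. You shift to $\Phi_0=\Phi(\cdot;y)-\inf_{x\in X}\Phi(x;y)$, use $|G|\leq R$ to get a global Lipschitz bound and an oscillation bound $M$ valid under every prior, invoke $C_{\mathrm P,2}(\mathcal N(m,C))\leq\|C\|_{\mathrm{op}}$, and apply Theorem~\ref{thm:lipschitz_stability} with $p=2$ and $\gamma_2=1$. The paper simply cites the Gaussian Poincar\'e inequality (Bogachev, Theorem~5.5.1) for that step. Your extra infinite-dimensional sketch works provided the finite-dimensional inequality is applied to the conditional expectations $\mathbb E[f\mid P_nx]$, whose slopes are bounded by fiber averages of $|\mathrm Df|$ via reverse Fatou, and not to $f\circ P_n$: the law $(P_n)_\#\mu$ is carried by $P_nX$, which is $\mu$-null once $\operatorname{rank}C>n$, so $|\mathrm Df|$ is uncontrolled there.
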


\begin{proof}
The potential $\Phi(\dummy;y)$ is bounded and globally Lipschitz, with bounds depending only on $R$, $\operatorname{Lip}(G)$, $|y|$, and $\|\Gamma^{-1}\|_{\mathrm{op}}$. Gaussian measures have finite second moments and satisfy $C_{\mathrm{P},2}(\mathcal N(m,C))\leq\|C\|_{\mathrm{op}}$ by the Gaussian Poincar\'e inequality \cite[Theorem~5.5.1]{bogachev1998gaussian}. Applying Theorem~\ref{thm:lipschitz_stability} with $p=2$ to $\Phi(\dummy;y)-\inf_{x\in X}\Phi(x;y)$ proves the result.
\end{proof}

The prior distance measures perturbations of both the mean and covariance through the formula \cite[Theorem~3.5]{gelbrich1990formula}
\begin{equation*}
 W_2(\mu,\nu)^2
 =\|m-\widetilde m\|_X^2
 +\operatorname{tr}(C)+\operatorname{tr}(\widetilde C)
 -2\operatorname{tr}\bigl((C^{1/2}\widetilde C C^{1/2})^{1/2}\bigr),
\end{equation*}
where $\operatorname{tr}$ denotes the trace. The posterior bound remains valid when these prior perturbations produce mutually singular Gaussian measures. In infinite dimensions, this may occur for a mean shift outside the Cameron--Martin space, or for covariance changes violating the Feldman--H\'ajek equivalence conditions; see \cite[Section~2.7]{sullivan2015introduction}.

\section{A Numerical Example: Two-Layer Darcy Flow}
\label{sec:numerical_example}

In this section, we numerically study the stability of the prior-to-posterior map for a two-layer Darcy flow inverse problem. The two-dimensional model with explicit forward evaluation isolates how prior geometry affects the stability rate. We introduce the observation model in Subsection~\ref{ssec:darcy_observation} and consider Gaussian prior perturbations in Subsection~\ref{ssec:darcy_gaussian}. For Gaussian priors, the results show linear prior-to-posterior sensitivity for small perturbations and illustrate the corresponding bounds on errors in posterior QoI statistics. In Subsection~\ref{ssec:darcy_atomic}, we show a square-root sensitivity for the prior-to-posterior map when using two-point priors, illustrating the different behavior permitted by the H\"older and Lipschitz estimates.

\subsection{Observation Model}
\label{ssec:darcy_observation}

The forward model below is a two-layer specialization of the one-dimensional diffusion model in \cite[Section~3.3]{stuart2010inverse}, after normalizing the boundary pressures. Let $X=\mathbb R^2$ with the Euclidean norm. For $x=(x_1,x_2)\in X$, define the permeability as
\begin{equation*}
 a(x,s)\coloneqq
 \begin{cases}
 \kappa(x_1),&0<s<1/2,\\
 \kappa(x_2),&1/2<s<1,
 \end{cases}
 \qquad
 \kappa(z)\coloneqq\frac32+\frac12\tanh z.
\end{equation*}
The pressure $p_x$ is the weak solution of
\begin{equation*}
 -\frac{\mathrm d}{\mathrm ds}\left(a(x,s)\frac{\mathrm dp_x(s)}{\mathrm ds}\right)=0
 \quad\text{in }(0,1),
 \qquad p_x(0)=0,\quad p_x(1)=1.
\end{equation*}
Continuity of pressure and flux at $s=1/2$ gives the forward map
\begin{equation}\label{eq:darcy_forward}
 G(x)\coloneqq p_x(1/2)
 =\frac{\kappa(x_2)}{\kappa(x_1)+\kappa(x_2)},
 \qquad \frac13<G<\frac23,
 \qquad \operatorname{Lip}(G)\leq\frac{\sqrt2}{8}.
\end{equation}
We take additive Gaussian noise, independent of $x$, with
\begin{equation}\label{eq:darcy_likelihood}
 y=G(x)+\eta,\qquad \eta\sim\mathcal N(0,\sigma^2),\qquad
 \Phi(x)\coloneqq\frac{(y-G(x))^2}{2\sigma^2},\qquad L\coloneqq e^{-\Phi}.
\end{equation}
We set $y=0.55$ and $\sigma=0.15$.

\subsection{Gaussian Prior Perturbations}
\label{ssec:darcy_gaussian}

We perturb the direction of a rank-one Gaussian prior by setting
\begin{equation}\label{eq:darcy_gaussian_priors}
 v\coloneqq\frac{(1,-1)}{\sqrt2},\qquad
 w\coloneqq\frac{(1,1)}{\sqrt2},\qquad
 v_h\coloneqq v+hw,\qquad
 \mu_h\coloneqq\mathcal N(0,v_hv_h^\top),\qquad 0\leq h\leq\frac12.
\end{equation}
Coupling $\xi v$ and $\xi v_h$ with the same $\xi\sim\mathcal N(0,1)$ is optimal, giving
\begin{equation}\label{eq:darcy_prior_error}
 W_2(\mu_0,\mu_h)=h.
\end{equation}
For $h>0$, the priors $\mu_0$ and $\mu_h$ are mutually singular because they are supported on distinct lines. Positivity of $L$ preserves this mutual singularity for the posteriors $\rho_h\coloneqq\mathcal B_L(\mu_h)$. Since $C_{\mathrm P,2}(\mu_h)\leq1+h^2\leq5/4$, Theorem~\ref{thm:lipschitz_stability} gives
\begin{equation}\label{eq:darcy_posterior_bound}
 W_2(\rho_0,\rho_h)\leq C_{\mathrm{stab}}h,
 \qquad C_{\mathrm{stab}}\approx7.0867.
\end{equation}
Figure~\ref{fig:darcy_comparison} (left) shows the posterior distance for twelve values of $h$ between $10^{-4}$ and $0.5$; Appendix~\ref{app:darcy_details} gives the quadrature method for the distance computation and the calculation of $C_{\mathrm{stab}}$. For $h\leq10^{-2}$, the fitted log--log slope is approximately $1$ and $W_2(\rho_0,\rho_h)/h\approx0.9153$. Thus \eqref{eq:darcy_posterior_bound} captures the observed linear rate, but its constant is conservative.

For the first-layer permeability $Q(x)\coloneqq\kappa(x_1)$, define the errors in the posterior mean and standard deviation by
\begin{equation*}
 e_{\mathrm{mean}}(h)\coloneqq|\rho_h(Q)-\rho_0(Q)|,
 \qquad
 e_{\mathrm{sd}}(h)\coloneqq|\operatorname{sd}_{\rho_h}(Q)-\operatorname{sd}_{\rho_0}(Q)|.
\end{equation*}
Since $\operatorname{Lip}(Q)=1/2$, \eqref{eq:mean_bound_wasserstein} and \eqref{eq:sd_bound_wasserstein} imply
\begin{equation}\label{eq:darcy_qoi_bound}
 \max\{e_{\mathrm{mean}}(h),e_{\mathrm{sd}}(h)\}
 \leq\frac12W_2(\rho_0,\rho_h)
 \leq\frac12C_{\mathrm{stab}}h.
\end{equation}
In Figure~\ref{fig:darcy_comparison} (middle), the error in the standard deviation exceeds the error in the mean for every tested $h$, and both lie below the bound in \eqref{eq:darcy_qoi_bound}.

\subsection{Comparison with Two-Point Priors}
\label{ssec:darcy_atomic}

For the same likelihood, consider
\begin{equation*}
 \nu_0\coloneqq\frac12\delta_0+\frac12\delta_v,
 \qquad
 \nu_h\coloneqq\frac12\delta_{hv}+\frac12\delta_v,
 \qquad 0<h\leq\frac12.
\end{equation*}
These priors have uniformly bounded moments of every order and a common positive evidence lower bound, but infinite Poincar\'e constants. The prior distance is $W_2(\nu_0,\nu_h)=h/\sqrt2$, whereas the posterior distance is of order $h^{1/2}$ as $h\downarrow0$; the formula is given in Appendix~\ref{app:darcy_atomic}.  This attains the exponent $1/2$ in our H\"older stability result in Theorem~\ref{thm:holder_stability}, with $\beta=1$, $p=2$, and $q=4$. Figure~\ref{fig:darcy_comparison} (right) contrasts this square-root rate with the linear posterior error for the Gaussian prior family, consistent with our Lipschitz stability result in Theorem~\ref{thm:lipschitz_stability}.

\begin{figure}[H]
 \centering
  
    \addtolength{\tabcolsep}{-5pt}
 \begin{tabular}{c c c}
  \includegraphics[width=0.32\linewidth]{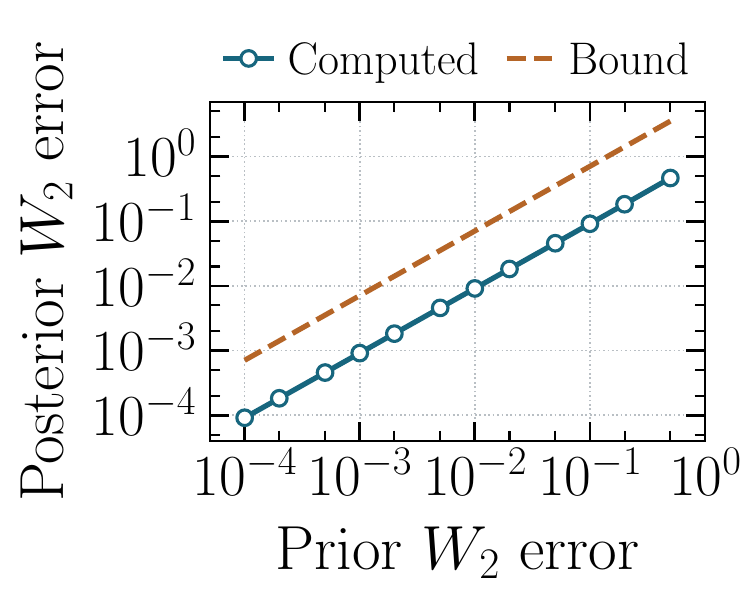} & \includegraphics[width=0.32\linewidth]{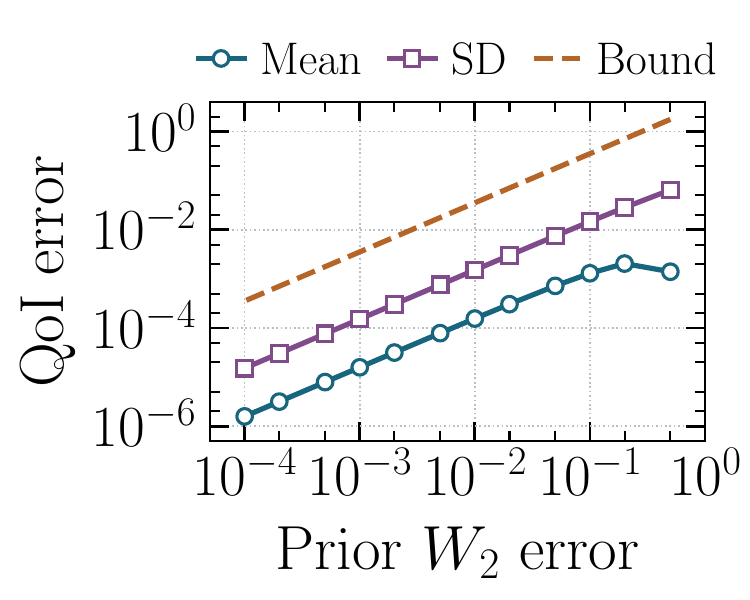} & \includegraphics[width=0.32\linewidth]{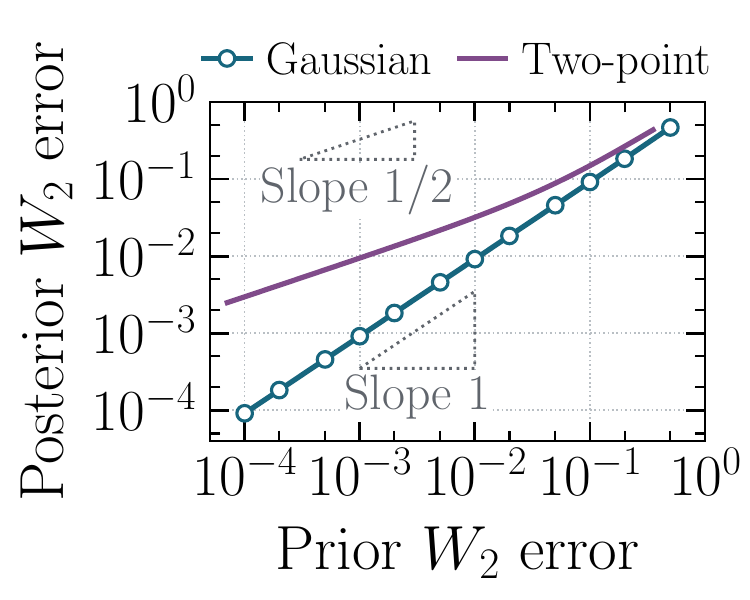}
 \end{tabular}
  \addtolength{\tabcolsep}{5pt}

 \caption{Sensitivity to prior perturbations for the Darcy flow inverse problem. \emph{Left:} Posterior $W_2$ error for Gaussian priors and the bound $C_{\mathrm{stab}}h$ in \eqref{eq:darcy_posterior_bound}. \emph{Middle:} Errors in the posterior mean and standard deviation (SD) of $Q(x)=\kappa(x_1)$ and their shared bound $C_{\mathrm{stab}}h/2$ in \eqref{eq:darcy_qoi_bound}. Dashed curves denote these bounds, with $C_{\mathrm{stab}}\approx7.0867$. \emph{Right:} Under the same likelihood, the posterior error is of order $h$ for Gaussian priors and $h^{1/2}$ for two-point priors as $h\downarrow0$. The horizontal axis shows the prior $W_2$ error, equal to $h$ for Gaussian priors and $h/\sqrt2$ for two-point priors; dotted triangles indicate log--log slopes $1$ and $1/2$.}
 \label{fig:darcy_comparison}
\end{figure}
\FloatBarrier

\section{Conclusion}\label{sec:conclusion}

We established uniform, Hölder, and Lipschitz stability of the prior-to-posterior map in the same Wasserstein metric $W_p$ under explicit assumptions on the likelihood and prior class. The Hölder exponent is sharp under the stated assumptions. These results control posterior approximation errors and, for $p\geq2$, errors in the means and standard deviations of Lipschitz QoIs. For bounded globally Lipschitz forward maps with additive Gaussian noise, the $W_2$ bound applies to Gaussian priors with uniformly bounded covariance operator norms, including mean and covariance perturbations that produce mutually singular measures.

A key takeaway of our stability analysis is that likelihood regularity and moment bounds alone do not ensure Lipschitz stability for $p>1$, even when the priors are supported in a common bounded set. Our Lipschitz result uses additional information about the geometry of the priors, expressed through their $L^{p/(p-1)}$-Poincaré constants. Uniform bounds on these constants, together with a globally Lipschitz potential and uniform bounds on its essential oscillation under the priors, provide sufficient conditions for Lipschitz stability. The Darcy example illustrates this distinction: posterior $W_2$ errors scale linearly with prior $W_2$ errors for the Gaussian family and as the square root of prior errors for the two-point family.

The Lipschitz stability result relies on restrictive regularity and essential-oscillation assumptions on the potential. In particular, it does not cover continuous unbounded potentials when the prior has full support. Relaxing these assumptions is an important direction for future research. For learned priors, a further question is whether a target prior can be approximated accurately while maintaining uniform bounds on the Lipschitz constants of the generating maps and on potential oscillation under the generated priors.

\section*{Declaration}

The author acknowledges the following use of AI tools: The numerical experiments are implemented with assistance from OpenAI GPT-5.3-codex in GitHub Copilot. Examples~\ref{ex:likelihood_regularity}, \ref{ex:pth_moment_tail}, and \ref{ex:local_no_holder} were suggested by OpenAI GPT-5.4.  Conversations with OpenAI GPT-5.6 Sol Ultra were important for pinpointing the verifiable, simple regularity conditions sufficient for the Lipschitz stability estimate. In particular, a uniform bound on the $\mu$-essential oscillation of the potential, rather than a global bound, was suggested by OpenAI GPT-5.6 Sol Ultra. Additionally, the extension of the estimate from $p=2$ to $p>1$, specifically the proof strategy of Lemma~\ref{lem:dual_wasserstein_speed}, was found through conversations with OpenAI GPT-5.6 Sol Ultra, and was digested and simplifed by the author. The author takes full responsibility for all the materials included in this manuscript.

\begingroup
\bibliographystyle{siamplain}
\bibliography{references}
\endgroup

\appendix
\section[Proof of the Posterior Coupling Estimate]{Proof of Lemma~\ref{lem:posterior_coupling_estimate}}\label{app:proof_of_posterior_coupling_estimate}

Set $Z_\mu\coloneqq \mu(L)$, $Z_\nu\coloneqq \nu(L)$, $a(x)\coloneqq L(x)/Z_\mu$ and $b(x')\coloneqq L(x')/Z_\nu$. Define
\begin{equation*}
 \gamma_0(\mathrm{d}x, \mathrm{d}x')\coloneqq \min\{a(x),b(x')\}\,\pi(\mathrm{d}x, \mathrm{d}x').
\end{equation*}
Since $\pi$ has marginals $\mu$ and $\nu$, both $a(x)\pi(\mathrm{d}x, \mathrm{d}x')$ and $b(x')\pi(\mathrm{d}x, \mathrm{d}x')$ have total mass one. Thus $\gamma_0$ is a subprobability measure whose marginals are dominated by the two posteriors. For Borel $A\subseteq X$, the residuals are
\begin{equation*}
 \eta_\mu(A)
 \coloneqq \int_{A\times X}(a(x)-b(x'))_+\,\pi(\mathrm{d}x, \mathrm{d}x'),
 \qquad
 \eta_\nu(A)
 \coloneqq \int_{X\times A}(b(x')-a(x))_+\,\pi(\mathrm{d}x, \mathrm{d}x'),
\end{equation*}
where $(t)_+\coloneqq\max\{t,0\}$. They have the same total mass
\begin{equation*}
 r\coloneqq \eta_\mu(X)=\eta_\nu(X)
   =1-\gamma_0(X\times X).
\end{equation*}
If $r>0$, then $\gamma\coloneqq \gamma_0+r^{-1}\eta_\mu\otimes\eta_\nu$ couples $\mathcal B_L(\mu)$ and $\mathcal B_L(\nu)$; if $r=0$, take $\gamma\coloneqq \gamma_0$.

Since $a,b\leq\zeta^{-1}$ and $\pi$ is $W_p$-optimal,
\begin{equation*}
 \int_{X\times X}d_X(x,x')^p\,\gamma_0(\mathrm{d}x, \mathrm{d}x')
 \leq\frac{W_p(\mu,\nu)^p}{\zeta}.
\end{equation*}
For $r>0$, the residual cost satisfies
\begin{align*}
 &\frac1r\int_{X\times X}d_X(x,x')^p\,
 (\eta_\mu\otimes\eta_\nu)(\mathrm{d}x, \mathrm{d}x')\\
 &\quad\leq
 2^{p-1}\left(
 \int_X d_X(x,x_*)^p\,\eta_\mu(\mathrm{d}x)
 +\int_X d_X(x',x_*)^p\,\eta_\nu(\mathrm{d}x')\right)\\
 &\quad\leq
 2^{p-1}\int_{X\times X}
 \bigl(d_X(x,x_*)^p+d_X(x',x_*)^p\bigr)
 |a(x)-b(x')|\,\pi(\mathrm{d}x, \mathrm{d}x').
\end{align*}
The same resulting bound holds when $r=0$, with the residual term absent. Consequently,
\begin{equation}\label{eq:wp_coupling_estimate}
 W_p\bigl(\mathcal B_L(\mu),\mathcal B_L(\nu)\bigr)^p
 \leq\frac{W_p(\mu,\nu)^p}{\zeta}
 +2^{p-1}\int_{X\times X}
 \bigl(d_X(x,x_*)^p+d_X(x',x_*)^p\bigr)
 |a(x)-b(x')|\,\pi(\mathrm{d}x, \mathrm{d}x').
\end{equation}

Using $Z_\mu,Z_\nu\geq\zeta$ and $0\leq L\leq1$, we have
\begin{equation}\label{eq:ab_bound}
 |a(x)-b(x')|
 \leq\frac{|L(x)-L(x')|}{\zeta}
 +\frac{|Z_\mu-Z_\nu|}{\zeta^2},
\end{equation}
while
\begin{equation*}
 |Z_\mu-Z_\nu|
 \leq\int_{X\times X}|L(x)-L(x')|\,\pi(\mathrm{d}x, \mathrm{d}x').
\end{equation*}
Substitution into \eqref{eq:ab_bound} and then \eqref{eq:wp_coupling_estimate}, together with
\begin{equation*}
 \int_{X\times X}\bigl(d_X(x,x_*)^p+d_X(x',x_*)^p\bigr)\,\pi(\mathrm{d}x,\mathrm{d}x')
 =m_p(\mu)+m_p(\nu),
\end{equation*}
gives the claimed estimate.

\section[Supplementary Lemmas for Lipschitz Stability]{Supplementary Lemmas for Theorem~\ref{thm:lipschitz_stability}}\label{app:supplementary_lemmas}

The proof of Theorem~\ref{thm:lipschitz_stability} uses the following two lemmas. The centering inequality controls the covariance obtained by differentiating the tilted measures, and the dual criterion for $W_p$-Lipschitz curves converts this derivative bound into a bound on the distance between the measures.

\begin{lemma}[Sharp Centering Inequality]
\label{lem:sharp_centering}
Let $p>1$ and $p'=p/(p-1)$. Define
\begin{equation*}
     \gamma_p
 :=\max_{0\leq s\leq1}
 \bigl(s^{p-1}+(1-s)^{p-1}\bigr)^{1/p}
 \bigl(s^{p'-1}+(1-s)^{p'-1}\bigr)^{1/p'}.
\end{equation*}
For every $\mu\in\mathcal{P}(X)$ and
$f\in L^{p'}_{\mu}(X)$,
\begin{equation}
\label{eq:sharp_centering}
 \|f-\mu(f)\|_{L^{p'}_{\mu}(X)}
 \leq\gamma_p\inf_{a\in\mathbb R}
 \|f-a\|_{L^{p'}_{\mu}(X)},
\end{equation}
where $\gamma_2=1$ and $\gamma_p<2$.
\end{lemma}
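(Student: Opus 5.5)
The plan is to reduce \eqref{eq:sharp_centering} to a one-dimensional extremal problem via duality. Write $r\coloneqq p'$ and fix $f\in L^{r}_\mu(X)$. By homogeneity and translation invariance of both sides (replacing $f$ by $f-a$ leaves $f-\mu(f)$ unchanged), it suffices to show that for every $g\in L^r_\mu$,
\begin{equation*}
 \|g-\mu(g)\|_{L^r_\mu}\leq\gamma_p\|g\|_{L^r_\mu}.
\end{equation*}
That is, the centering operator $P g\coloneqq g-\mu(g)$ has norm at most $\gamma_p$ on $L^r_\mu$. I will bound this norm by duality. $P$ is self-adjoint with respect to the $L^2_\mu$ pairing, so its norm on $L^r$ equals its norm on $L^p$, and
\begin{equation*}
 \|P\|_{r\to r}=\sup\Bigl\{\int_X g\,(h-\mu(h))\,\mathrm d\mu\Bigm|\|g\|_{L^r_\mu}\leq1,\ \|h\|_{L^p_\mu}\leq1\Bigr\}.
\end{equation*}

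The key step is a reduction to two-valued functions. For fixed $h$, the maximizing $g$ is the duality map of $h-\mu(h)$. I would instead exploit a known result on projections onto constants: the norm of $I-\mathbf 1\otimes\mu$ on $L^r$ is attained, or approached, on two-point spaces. This follows from an extreme-point argument. Fix the distribution of $h$; the functional is then linear in the law of the pair. Alternatively, a rearrangement/convexity argument shows that one can take $\mu$ to be a two-point measure with weights $s,1-s$, and $g$, $h$ functions on two points.

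On the two-point space with weights $(s,1-s)$, a centered function is $c\,(1-s,-s)$. Its $L^r$ norm is $|c|\,(s(1-s)^r+(1-s)s^r)^{1/r}$. Meanwhile $\inf_a\|g-a\|_{L^r}$ for $g=(g_1,g_2)$ is an explicit minimization of $s|g_1-a|^r+(1-s)|g_2-a|^r$. The first-order condition gives $a$ with $s^{1/(r-1)}$-type weights, that is, $s^{p-1}$ weights since $1/(r-1)=p-1$. Computing the ratio should yield exactly
\begin{equation*}
\bigl(s^{p-1}+(1-s)^{p-1}\bigr)^{1/p}\bigl(s^{p'-1}+(1-s)^{p'-1}\bigr)^{1/p'}.
\end{equation*}
Maximizing over $s$ then gives $\gamma_p$.

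The closing claims are direct. For $p=2$, each factor equals $1$, so $\gamma_2=1$, which is simply orthogonality of $P$ in $L^2$. For $\gamma_p<2$, note that each factor is at most $2^{1/p}$ or $2^{1/p'}$, with the bound attained only when both sums equal $2$. That forces $s^{p-1}+(1-s)^{p-1}=2$ and $s^{p'-1}+(1-s)^{p'-1}=2$ simultaneously. When $p\neq2$, one exponent is below $1$ and the other above $1$, so the two sums cannot both reach $2$. By continuity and compactness of $[0,1]$, the maximum is therefore strictly less than $2$.

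The main obstacle is the rigorous reduction to two-point measures. If that proves delicate, I would fall back on the crude estimate $\|f-\mu(f)\|\leq\|f-a\|+|\mu(f)-a|\leq2\|f-a\|$, which gives the constant $2$. The sharp bound needs the extremal argument: view the bilinear form over joint laws of $(g,h)$ and use that extreme points of the relevant constrained set of measures have at most two atoms.
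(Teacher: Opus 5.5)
Your reduction to the centering bound $\|g-\mu(g)\|_{L^{p'}_\mu(X)}\le\gamma_p\|g\|_{L^{p'}_\mu(X)}$ is the same step the paper takes: apply the bound to $f-a$, then take the infimum over $a$. Your two-point calculation is also correct. The optimal shift is $(s^{p-1}g_1+(1-s)^{p-1}g_2)/(s^{p-1}+(1-s)^{p-1})$, and the resulting ratio is exactly the quantity maximized in $\gamma_p$. Your arguments for $\gamma_2=1$ and $\gamma_p<2$ are fine.

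The gap is the step you flag yourself: the reduction from an arbitrary $\mu$ to two-point measures is asserted, and the mechanism you sketch does not deliver it.
\begin{itemize}
\item In the dual form, the objective $\int_X g\,(h-\mu(h))\,\mathrm d\mu$ is a covariance. It is quadratic, not linear, in the joint law of $(g,h)$, because of the product $\mu(g)\mu(h)$.
\item Freezing both means linearizes it, but leaves four moment constraints (two means, two norm bounds). Extreme-point counting then only yields laws with up to five atoms, not two.
\item Fixing $h$ is circular, since the supremum over $g$ just returns $\|h-\mu(h)\|_{L^{p}_\mu(X)}$, the same problem at the dual exponent.
\item The fallback constant $2$ proves a weaker statement than the lemma.
\end{itemize}
The paper does not prove this step either. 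It imports the sharp bound from Shargorodsky (used with exponent $p'$, the constant being symmetric in $p,p'$), together with $\gamma_p\le2^{|1-2/p|}$. Appealing to a ``known result'' on two-point extremality amounts to that same citation.

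To make your route self-contained, put the linearity in the primal problem with $a$ fixed. Let $\lambda$ be the law of $f-\mu(f)$ under $\mu$, and set $c\coloneqq\mu(f)-a$. The claim $\int|x|^{p'}\lambda(\mathrm dx)\le\gamma_p^{p'}\int|x+c|^{p'}\lambda(\mathrm dx)$ is linear in $\lambda$, subject to the single constraint $\int x\,\lambda(\mathrm dx)=0$.

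Every mean-zero law is a mixture of mean-zero laws with at most two atoms. Let $\lambda_+$ and $\lambda_-$ be the restrictions of $\lambda$ to $(0,\infty)$ and $(-\infty,0)$, let $\widetilde\lambda_-$ be the reflection of $\lambda_-$, and set $m\coloneqq\int x\,\lambda_+(\mathrm dx)$. If $m=0$ then $\lambda=\delta_0$ and there is nothing to prove. Otherwise
\[
\lambda=\lambda(\{0\})\,\delta_0+\frac1m\iint(u+v)\Bigl(\frac{v}{u+v}\,\delta_{-u}+\frac{u}{u+v}\,\delta_{v}\Bigr)\,\widetilde\lambda_-(\mathrm du)\,\lambda_+(\mathrm dv),
\]
which holds because $\int u\,\widetilde\lambda_-(\mathrm du)=m$.

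On each two-atom component your computation gives $\int|x|^{p'}\le\gamma_p^{p'}\inf_{a'}\int|x-a'|^{p'}\le\gamma_p^{p'}\int|x+c|^{p'}$. The atom at $0$ is trivial. Integrating over the mixture proves the lemma with the sharp constant $\gamma_p$.
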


\begin{proof}
Apply the centering bound in \cite[equations~(1.7)--(1.10)]{shargorodsky2023sharp} with exponent $p'$ to $f-a$, for any $a\in\mathbb R$. Since $(f-a)-\mu(f-a)=f-\mu(f)$ and the constant is invariant under conjugating the exponent, taking the infimum over $a$ proves \eqref{eq:sharp_centering}. The same reference gives $\gamma_2=1$ and $\gamma_p\leq2^{|1-2/p|}<2$ for $p>1$.
\end{proof}

The proof of the next lemma adapts the Hopf--Lax duality argument in
\cite[proof of Theorem~3.5]{gigli2015continuity} to arbitrary $p>1$, using
the power-cost estimates in \cite[Section~3]{ambrosio2015sobolev}.

\begin{lemma}[Dual Criterion for $W_p$-Lipschitz Curves]
\label{lem:dual_wasserstein_speed}
Let $(X,d_X)$ be a complete and separable metric space, let $p>1$, and set $p'\coloneqq p/(p-1)$. Let $(\rho_t)_{t\in[0,1]}\subset\mathcal P_p(X)$ satisfy the following
conditions:
\begin{enumerate}[label=(\roman*)]
 \item there exist $\vartheta\in\mathcal P(X)$ and $C<\infty$ such that $\rho_t\leq C\vartheta$ as measures for every $t\in[0,1]$;
 \item there exists $\Lambda\geq0$ such that, for every bounded Lipschitz function $f:X\to\mathbb R$, the map $t\mapsto\rho_t(f)$ is absolutely continuous on $[0,1]$ and
 \begin{equation*}
     \left|\frac{\mathrm d}{\mathrm dt}\rho_t(f)\right|
     \leq\Lambda\bigl\||\mathrm Df|\bigr\|_{L^{p'}_{\rho_t}(X)}
     \quad\text{for almost every }t\in(0,1).
 \end{equation*}
\end{enumerate}
Then $t\mapsto\rho_t$ is $\Lambda$-Lipschitz from $[0,1]$ into $(\mathcal P_p(X),W_p)$; equivalently,
\begin{equation}
 \label{eq:dual_speed_bound}
 W_p(\rho_s,\rho_t)\leq\Lambda(t-s),
 \qquad 0\leq s\leq t\leq1.
\end{equation}
\end{lemma}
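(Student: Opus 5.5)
We plan to follow the Hopf--Lax duality argument of \cite[Theorem~3.5]{gigli2015continuity}, with the quadratic cost replaced by a power cost. Fix $0\leq s<t\leq1$, set $\tau\coloneqq t-s$ and $\sigma_r\coloneqq\rho_{s+r\tau}$ for $r\in[0,1]$. For a bounded Lipschitz $\varphi$, consider the Hopf--Lax evolution
\begin{equation*}
 Q_r\varphi(x)\coloneqq\inf_{y\in X}\Bigl\{\varphi(y)+\frac{d_X(x,y)^p}{p\,r^{p-1}}\Bigr\},
 \qquad Q_0\varphi\coloneqq\varphi .
\end{equation*}
By Kantorovich duality for the cost $d_X^p/p$, we have
\begin{equation*}
 \frac{W_p(\rho_s,\rho_t)^p}{p}=\sup_{\varphi}\Bigl(\int_X Q_1\varphi\,\mathrm d\rho_t-\int_X\varphi\,\mathrm d\rho_s\Bigr).
\end{equation*}
We first reduce the supremum to bounded Lipschitz $\varphi$. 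This is standard: truncate and approximate, using that both measures lie in $\mathcal P_p(X)$. It therefore suffices to show, for each such $\varphi$,
\begin{equation*}
 \int_X Q_1\varphi\,\mathrm d\sigma_1-\int_X\varphi\,\mathrm d\sigma_0\leq\frac{(\Lambda\tau)^p}{p}.
\end{equation*}
That bound gives $W_p(\rho_s,\rho_t)\leq\Lambda\tau$ directly.

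The key ingredients come from \cite[Section~3]{ambrosio2015sobolev}. For bounded Lipschitz $\varphi$, each $Q_r\varphi$ is bounded and Lipschitz, with Lipschitz constants bounded uniformly in $r\in[\epsilon,1]$ and controlled near $r=0$ by $\operatorname{Lip}(\varphi)$. The map $r\mapsto Q_r\varphi(x)$ is nonincreasing and locally Lipschitz. Moreover, for every $x$ and almost every $r$, the subsolution inequality holds:
\begin{equation*}
 \frac{\mathrm d}{\mathrm dr}Q_r\varphi(x)+\frac{1}{p'}|\mathrm DQ_r\varphi|^{p'}(x)\leq0 .
\end{equation*}
The next step is to differentiate $r\mapsto\int_X Q_r\varphi\,\mathrm d\sigma_r$. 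We do this through difference quotients, splitting
\begin{equation*}
 \int Q_{r+h}\varphi\,\mathrm d\sigma_{r+h}-\int Q_r\varphi\,\mathrm d\sigma_r
 =\int\bigl(Q_{r+h}\varphi-Q_r\varphi\bigr)\,\mathrm d\sigma_{r+h}
 +\Bigl(\int Q_r\varphi\,\mathrm d\sigma_{r+h}-\int Q_r\varphi\,\mathrm d\sigma_r\Bigr).
\end{equation*}
For the second term we use hypothesis (ii) with $f=Q_r\varphi$, rescaled by $\tau$. It is at most $h\tau\Lambda\||\mathrm DQ_r\varphi|\|_{L^{p'}_{\sigma_r}}$ to first order.

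For the first term we use $\sigma_{r+h}\leq C\vartheta$ from (i). This domination lets us pass the pointwise time derivative of $Q_r\varphi$ inside the integral against a single fixed measure. It uses the uniform Lipschitz-in-$r$ bound of $Q_r\varphi$ on $[\epsilon,1]$, Fubini, and dominated convergence. Combining the two terms gives, for almost every $r$,
\begin{equation*}
 \frac{\mathrm d}{\mathrm dr}\int Q_r\varphi\,\mathrm d\sigma_r
 \leq-\frac{1}{p'}\int|\mathrm DQ_r\varphi|^{p'}\mathrm d\sigma_r+\tau\Lambda\bigl\||\mathrm DQ_r\varphi|\bigr\|_{L^{p'}_{\sigma_r}}
 \leq\frac{(\tau\Lambda)^p}{p}.
\end{equation*}
The last inequality is Young's inequality $ab\leq a^{p'}/p'+b^p/p$. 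Integrating over $[\epsilon,1]$ and letting $\epsilon\downarrow0$, using $Q_\epsilon\varphi\to\varphi$ and the weak continuity of $\sigma_r$, yields the claim.

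I expect the main obstacle to be this differentiation step. The difficulty is the joint product rule for $r\mapsto\int Q_r\varphi\,\mathrm d\sigma_r$, since hypothesis (ii) is stated only for a fixed test function, while here the test function $Q_r\varphi$ itself moves in time. My first approach is to prove absolute continuity of the composite via the difference-quotient splitting above, with the remainder controlled by the Lipschitz dependence of $Q_r\varphi$ in $r$ and the domination (i). The derivative bound in (ii) must then be applied at $f=Q_r\varphi$ for almost every $r$, which requires a Lebesgue-point argument in two variables. If that is delicate, I would instead discretize $[0,1]$ into a partition, bound each increment separately, and pass to the limit as the mesh goes to zero. A secondary issue is behavior near $r=0$, where the Lipschitz constant of $Q_r\varphi$ in $r$ may degenerate; restricting to $[\epsilon,1]$ handles this.
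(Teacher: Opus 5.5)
Your outline matches the paper's: the rescaled curve, the $p$-power Hopf--Lax semigroup, duality over bounded Lipschitz $\varphi$, the split of the difference quotient of $G(r)\coloneqq\sigma_r(Q_r\varphi)$ into a test-function change and a measure change, the Hamilton--Jacobi subsolution inequality, and Young. However, the step you flag as the main obstacle is left open, and the routes you suggest do not close it as stated.

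The null set in (ii) depends on $f$, and $f=Q_r\varphi$ moves with $r$. A ``two-variable Lebesgue-point'' argument would therefore need $r$ to be a right Lebesgue point of $u\mapsto\bigl\||\mathrm DQ_r\varphi|\bigr\|_{L^{p'}_{\sigma_u}}$ for a.e.\ $r$. That is a statement about the diagonal $\{u=r\}$, a null subset of $[0,1]^2$, so Lebesgue differentiation plus Fubini gives nothing there. The partition fallback, as sketched, hits the same mismatch between the time of the test function and the time of the measure.

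The missing idea is that hypothesis (i) upgrades the narrow continuity of $u\mapsto\sigma_u$ to continuity against \emph{every bounded Borel} $g$. Narrow continuity comes from (ii), since bounded Lipschitz functions determine narrow convergence. For the upgrade, let $u_n\to u$ and choose $g_k\in C_b(X)$ approximating $g$ in $L^1_\vartheta$. Then $\limsup_n|\sigma_{u_n}(g)-\sigma_u(g)|\leq2C\|g-g_k\|_{L^1_\vartheta}$, which tends to $0$ as $k\to\infty$. Take $r$ fixed and $g=|\mathrm DQ_r\varphi|^{p'}$; this makes $u\mapsto\bigl\||\mathrm DQ_r\varphi|\bigr\|_{L^{p'}_{\sigma_u}}$ continuous. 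You then only need the \emph{integrated} form of (ii) for the fixed test function $f=Q_r\varphi$:
\[
 \frac{(\sigma_{r+h}-\sigma_r)(Q_r\varphi)}{h}
 \leq\frac{\tau\Lambda}{h}\int_r^{r+h}\bigl\||\mathrm DQ_r\varphi|\bigr\|_{L^{p'}_{\sigma_u}}\,\mathrm du
 \longrightarrow\tau\Lambda\bigl\||\mathrm DQ_r\varphi|\bigr\|_{L^{p'}_{\sigma_r}}
 \quad\text{as }h\downarrow0.
\]
This holds at \emph{every} $r$, so no exceptional set has to be controlled. Narrow continuity alone would not suffice. For example, take $\sigma_u=\delta_u$ on $\mathbb R$ and a bounded Lipschitz $f$ equal to $x^2\sin(1/x)$ near $0$; then $u\mapsto|\mathrm Df|(u)$ is discontinuous at $0$.

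The same ingredient is what your first term still lacks. Domination and dominated convergence give $q_h\coloneqq(Q_{r+h}\varphi-Q_r\varphi)/h\to\partial_rQ_r\varphi$ in $L^1_\vartheta$. Hence $|\sigma_{r+h}(q_h)-\sigma_{r+h}(\partial_rQ_r\varphi)|\leq C\|q_h-\partial_rQ_r\varphi\|_{L^1_\vartheta}\to0$. You then still need $\sigma_{r+h}(\partial_rQ_r\varphi)\to\sigma_r(\partial_rQ_r\varphi)$ for a function that is merely bounded and Borel, and weak continuity does not give this.

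Two smaller points:
\begin{itemize}
\item Your Fubini step needs joint Borel measurability of $(r,x)\mapsto\partial_rQ_r\varphi(x)$ (extended by zero where the derivative does not exist) and of $(r,x)\mapsto|\mathrm DQ_r\varphi|(x)$. Both can be checked with rational difference quotients and a countable dense subset of $X$.
\item The cutoff to $[\epsilon,1]$ is unnecessary. For bounded Lipschitz $\varphi$ one has $\operatorname{Lip}(Q_r\varphi)\leq p\operatorname{Lip}(\varphi)$, and $r\mapsto Q_r\varphi$ is Lipschitz in the supremum norm on all of $[0,1]$. This also makes $G$ Lipschitz directly.
\end{itemize}
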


\begin{proof}
We first outline the proof strategy. Fix $0\leq s<t\leq1$, set $h\coloneqq t-s$, $A\coloneqq h\Lambda$, and $\mu_r\coloneqq\rho_{s+hr}$ for $r\in[0,1]$.
For a bounded Lipschitz function $\varphi:X\to\mathbb R$, define
\begin{equation*}
 \mathsf Q_r\varphi(x)
 \coloneqq\inf_{x'\in X}\left\{
 \varphi(x')+\frac{d_X(x,x')^p}{p r^{p-1}}\right\},
 \qquad r>0,
 \qquad \mathsf Q_0\varphi\coloneqq\varphi.
\end{equation*}
Kantorovich duality for the cost $d_X^p/p$
\cite[equation~(2.3)]{ambrosio2015sobolev} gives
\begin{equation*}
 \frac1pW_p^p(\mu_0,\mu_1)
 =\sup_{\varphi\text{ bounded Lipschitz}}
 \{\mu_1(\mathsf Q_1\varphi)-\mu_0(\varphi)\}.
\end{equation*}
It therefore suffices to prove that $G(r)\coloneqq\mu_r(\mathsf Q_r\varphi)$ is absolutely continuous with $G'(r)\leq A^p/p$ almost everywhere, for every such $\varphi$. The Hamilton--Jacobi inequality for $\mathsf Q_r\varphi$ provides a negative term involving the pointwise local Lipschitz constant, which absorbs the contribution from the changing measure through assumption~(ii) and Young's inequality. We first use uniform domination to justify this calculation with time-dependent test functions.

Assumption~(ii) implies that $r\mapsto\mu_r(f)$ is absolutely continuous for every bounded Lipschitz $f$. Thus $(\mu_r)_{r\in[0,1]}$ is narrowly continuous, since bounded Lipschitz functions determine narrow convergence. Uniform domination then gives continuity against every bounded Borel function $g$. Indeed, if $r_n\to r$, choose $g_k\in C_b(X)$ converging to $g$ in $L^1_{\vartheta}(X)$. Then
\begin{equation*}
 \limsup_{n\to\infty}|\mu_{r_n}(g)-\mu_r(g)|
 \leq2C\|g-g_k\|_{L^1_{\vartheta}(X)}\longrightarrow0
 \qquad\text{as }k\to\infty.
\end{equation*}
Applying this observation to the bounded Borel function $|\mathrm Df|^{p'}$ shows that $u\mapsto\bigl\||\mathrm Df|\bigr\|_{L^{p'}_{\mu_u}(X)}$ is continuous for each fixed bounded Lipschitz $f$. Rescaling and integrating the bound in assumption~(ii) gives
\begin{equation*}
 |\mu_b(f)-\mu_a(f)|
 \leq A\int_a^b
 \bigl\||\mathrm Df|\bigr\|_{L^{p'}_{\mu_u}(X)}\,\mathrm du,
 \qquad 0\leq a\leq b\leq1.
\end{equation*}

Fix $\varphi$ as above. The Hopf--Lax estimates in \cite[Propositions~11 and~13, Theorem~14, and equation~(3.11)]{ambrosio2015sobolev} show that $r\mapsto\mathsf Q_r\varphi$ is Lipschitz in the supremum norm, that $\operatorname{Lip}(\mathsf Q_r\varphi)\leq p\operatorname{Lip}(\varphi)$, and that, for each $x\in X$, the time derivative exists outside a countable set of times and satisfies
\begin{equation*}
 \dot{\mathsf Q}_r\varphi(x)
 +\frac1{p'}|\mathrm D\mathsf Q_r\varphi|^{p'}(x)\leq0.
\end{equation*}
Here $|\mathrm D\mathsf Q_r\varphi|(x)$ denotes the pointwise local Lipschitz constant of $\mathsf Q_r\varphi$ at $x$, with $r$ fixed, as defined in Section~\ref{sec:lipschitz_stability}. The integrated bound above also gives
\begin{equation*}
 |G(b)-G(a)|
 \leq\|\mathsf Q_b\varphi-\mathsf Q_a\varphi\|_\infty
 +A p\operatorname{Lip}(\varphi)(b-a),
 \qquad 0\leq a<b\leq1,
\end{equation*}
so $G$ is Lipschitz.

We next apply the Hamilton--Jacobi inequality at common times. The map $(r,x)\mapsto\mathsf Q_r\varphi(x)$ is jointly continuous. Rational difference quotients show that the set where its time derivative exists is Borel and that the derivative, extended by zero elsewhere, is bounded and Borel. Computing the pointwise local Lipschitz constant over a countable dense subset of $X$ likewise shows that $(r,x)\mapsto|\mathrm D\mathsf Q_r\varphi|(x)$ is Borel. Fubini's theorem therefore gives derivative existence and the Hamilton--Jacobi inequality for $\vartheta$-almost every $x$, for almost every $r\in(0,1)$. Since $\mu_r\leq C\vartheta$, both statements also hold $\mu_r$-almost everywhere at each such time.

Fix one of these times $r$ at which $G'(r)$ also exists, and set
\begin{equation*}
 z_r\coloneqq\bigl\||\mathrm D\mathsf Q_r\varphi|\bigr\|_{L^{p'}_{\mu_r}(X)},
 \qquad
 q_\varepsilon\coloneqq
 \frac{\mathsf Q_{r+\varepsilon}\varphi-\mathsf Q_r\varphi}{\varepsilon},
 \qquad 0<\varepsilon<1-r.
\end{equation*}
We separate the changes in the test function and the measure:
\begin{equation*}
 \frac{G(r+\varepsilon)-G(r)}{\varepsilon}
 =\mu_{r+\varepsilon}(q_\varepsilon)
 +\frac{(\mu_{r+\varepsilon}-\mu_r)(\mathsf Q_r\varphi)}{\varepsilon}.
\end{equation*}
The functions $q_\varepsilon$ are uniformly bounded and converge to $\dot{\mathsf Q}_r\varphi$ $\vartheta$-almost everywhere, hence in $L^1_{\vartheta}(X)$. Domination and continuity against bounded Borel functions give
\begin{align*}
 |\mu_{r+\varepsilon}(q_\varepsilon)-\mu_r(\dot{\mathsf Q}_r\varphi)|\leq C\|q_\varepsilon-\dot{\mathsf Q}_r\varphi\|_{L^1_{\vartheta}(X)}
 +|\mu_{r+\varepsilon}(\dot{\mathsf Q}_r\varphi)
       -\mu_r(\dot{\mathsf Q}_r\varphi)|\longrightarrow0.
\end{align*}
For the second term, the exceptional set in assumption~(ii) may depend
on the test function. We therefore use its integrated form with
$f=\mathsf Q_r\varphi$ held fixed. Continuity of the
$L^{p'}_{\mu_u}(X)$-norm of $|\mathrm D\mathsf Q_r\varphi|$ gives
\begin{equation*}
 \limsup_{\varepsilon\downarrow0}
 \frac{(\mu_{r+\varepsilon}-\mu_r)(\mathsf Q_r\varphi)}{\varepsilon}
 \leq\lim_{\varepsilon\downarrow0}\frac A\varepsilon
 \int_r^{r+\varepsilon}
 \bigl\||\mathrm D\mathsf Q_r\varphi|\bigr\|_{L^{p'}_{\mu_u}(X)}\,\mathrm du
 =A z_r.
\end{equation*}
Combining these bounds, integrating the Hamilton--Jacobi inequality
against $\mu_r$, and applying Young's inequality yields
\begin{equation*}
 G'(r)\leq\mu_r(\dot{\mathsf Q}_r\varphi)+A z_r
 \leq-\frac{z_r^{p'}}{p'}+A z_r
 \leq\frac{A^p}{p}.
\end{equation*}
This holds for almost every $r$. Since $G$ is Lipschitz,
\begin{equation*}
 \mu_1(\mathsf Q_1\varphi)-\mu_0(\varphi)
 =G(1)-G(0)\leq\frac{A^p}{p}.
\end{equation*}
Taking the supremum over $\varphi$ in the dual formula gives
$W_p(\rho_s,\rho_t)=W_p(\mu_0,\mu_1)\leq A=\Lambda(t-s)$.
The case $s=t$ is immediate.
\end{proof}

\Needspace{6\baselineskip}
\enlargethispage{2\baselineskip}
\section{Details of the Numerical Example}\label{app:darcy_details}

\subsection{Stability Constant}\label{app:darcy_constants}

For the model in Section~\ref{sec:numerical_example}, set
\begin{equation*}
 d_y\coloneqq\max\{|y-1/3|,|y-2/3|\},\qquad
 M\coloneqq\frac{d_y^2}{2\sigma^2},\qquad
 \overline K\coloneqq\frac{\sqrt2\,d_y}{8\sigma^2}.
\end{equation*}
Then $0\leq\Phi\leq M$ and $\operatorname{Lip}(\Phi)\leq \overline K$. With $C_{\mathrm P,2}(\mu_h)\leq5/4$, Theorem~\ref{thm:lipschitz_stability} gives
\begin{equation*}
 C_{\mathrm{stab}}=e^{M/2}+ \overline K\sqrt{5/4}\,e^M\approx7.0867.
\end{equation*}
This bound can be conservative: at $\sigma=0.05$ it gives $C_{\mathrm{stab}}\approx2.05\times10^5$, whereas the computed ratio $W_2(\rho_0,\rho_h)/h$ at $h=10^{-4}$ is approximately $0.676$.

\subsection{Posterior Wasserstein Distance}\label{app:posterior_distance}

The posterior distances can be computed by one-dimensional quadrature. Let $F_h$ be the cumulative distribution function of the latent posterior with density
\begin{equation*}
 f_h(z)\coloneqq\frac{L(zv_h)e^{-z^2/2}}{Z_{\mu_h}\sqrt{2\pi}},
 \qquad z\in\mathbb R.
\end{equation*}
Since $v,w$ are orthonormal, $\|zv-z'v_h\|_X^2=(z-z')^2+h^2(z')^2$. The second term depends only on the second marginal. Monotone coupling therefore minimizes the cost \cite[Theorem~2.9]{santambrogio2015optimal} and gives
\begin{equation}\label{eq:darcy_posterior_quantiles}
 W_2(\rho_0,\rho_h)^2
 =\int_0^1\left[
 \bigl(F_0^{-1}(t)-F_h^{-1}(t)\bigr)^2
 +h^2\bigl(F_h^{-1}(t)\bigr)^2\right]\,\mathrm dt.
\end{equation}
The second term in \eqref{eq:darcy_posterior_quantiles} is $h^2\int_{\mathbb R}z^2f_h(z)\,\mathrm dz$, whose coefficient tends to a positive limit as $h\downarrow0$ by dominated convergence. Together with \eqref{eq:darcy_posterior_bound}, this gives $W_2(\rho_0,\rho_h)\asymp h$.

We compute $F_h$ by the composite trapezoidal rule on $80\,001$ equally spaced points in $[-10,10]$ and invert it by linear interpolation. We evaluate \eqref{eq:darcy_posterior_quantiles} by the composite Simpson rule after substituting $t=F_{\mathcal N}(r)$, where $F_{\mathcal N}$ is the standard normal cumulative distribution function, using $32\,001$ equally spaced points in $[-7.5,7.5]$. At $h=0$, Simpson quadrature of the permeability and its centered square against $f_0$ on the latent grid gives the reference mean $\rho_0(Q)\approx1.4565$ and standard deviation $\operatorname{sd}_{\rho_0}(Q)\approx0.2411$. Halving both grid spacings and then enlarging the intervals to $[-12,12]$ and $[-8,8]$ changed the posterior distances by less than $10^{-7}$ relatively in each comparison.

\subsection{Two-Point Priors}\label{app:darcy_atomic}

Write $\ell(t)\coloneqq L(tv)$ and $a_h\coloneqq\ell(h)/(\ell(h)+\ell(1))$. Since $G(tv)=1/2-\tanh(t/\sqrt2)/6$ and $y>1/2$, the likelihood $\ell$ decreases on $[0,\infty)$, so $a_h<a_0$ for $h>0$. Differentiating at zero gives
\begin{equation*}
 a'_0=-\frac{a_0(1-a_0)(y-1/2)}{6\sqrt2\,\sigma^2}<0.
\end{equation*}
Monotone transport gives
\begin{equation}\label{eq:darcy_atomic_error}
 W_2(\nu_0,\nu_h)=\frac{h}{\sqrt2},
 \qquad
 W_2\bigl(\mathcal B_L(\nu_0),\mathcal B_L(\nu_h)\bigr)^2
 =a_hh^2+(a_0-a_h)\sim-a'_0h.
\end{equation}
The priors are supported in the unit ball and have evidence at least $e^{-M}$. Their Poincar\'e constants are infinite.

\end{document}